\declaretheorem[name=Theorem]{THM:positivity-char}
\theoremstyle{definition}
\newtheorem{theorem}{Theorem}[section]
\newtheorem{definition}[theorem]{Definition}
\newtheorem{corollary}[theorem]{Corollary}
\newtheorem{prop}[theorem]{Proposition}
\newtheorem{lemma}[theorem]{Lemma}
\newtheorem{example}[theorem]{Example}
\theoremstyle{remark}
\newcommand{\Z}{{\mathbb Z}}
\newcommand{\N}{{\mathbb N}}
\newcommand{\mc}{\mathcal}
\title{On word complexity and topological entropy of random substitution subshifts}
\author{Andrew Mitchell}
\address{School of Mathematics, University of Birmingham, Edgbaston, B15 2TT, UK.}
\subjclass[2020]{37B10, 37B40, 52C23, 94A17}
\keywords{Subshift; word complexity; topological entropy; random substitution.}
\begin{document}

\begin{abstract}
    We consider word complexity and topological entropy for random substitution subshifts.
    In contrast to previous work, we do not assume that the underlying random substitution is compatible. 
    We show that the subshift of a primitive random substitution has zero topological entropy if and only if it can be obtained as the subshift of a deterministic substitution, answering in the affirmative an open question of Rust and Spindeler. 
    For constant length primitive random substitutions, we develop a systematic approach to calculating the topological entropy of the associated subshift.
    Further, we prove lower and upper bounds that hold even without primitivity.
    For subshifts of non-primitive random substitutions, we show that the complexity function can exhibit features not possible in the deterministic or primitive random setting, such as intermediate growth, and provide a partial classification of the permissible complexity functions for subshifts of constant length random substitutions.
\end{abstract}

\maketitle

\section{Introduction}

Topological entropy is a well-studied invariant of dynamical systems that quantifies the complexity of a given system.
In symbolic dynamics, the main objects of study are \emph{subshifts}.
For a given subshift $X$, the \emph{complexity function} $p_X \colon \N \rightarrow \N$ is the function that, for each $n \in \N$, returns the number of distinct words of length $n$ in the language of the subshift $X$.
The topological entropy of $X$ quantifies the exponential growth rate of the complexity function. Specifically, the topological entropy of $X$ is the quantity defined by
\begin{equation*}
    h_{\operatorname{top}} (X) = \lim_{n \rightarrow \infty} \frac{1}{n} \log p_X (n) \text{;}
\end{equation*}
that the limit exists follows by the sub-additivity of $\log p_X (n)$ and Fekete's lemma \cite{lind-marcus}. 

Subshifts associated with deterministic substitutions are the prototypical examples of mathematical quasicrystals and are well-studied dynamical systems with zero topological entropy \cite{queffelec}.
It was shown by Pansiot \cite{pansiot} that the complexity function of a deterministic substitution subshift is at most quadratic, and falls into one of five complexity classes depending on properties of the substitution.
Further, for \emph{primitive} substitutions, the complexity function either grows linearly or is bounded above by a constant.
Random substitutions are a generalisation of deterministic substitutions where, instead of replacing letters deterministically, the substituted image of a letter is chosen from a fixed finite set according to a probability distribution. 
Similarly to deterministic substitutions, a subshift can be associated with a given random substitution in a canonical manner.
However, in contrast to their deterministic counterparts, random substitution subshifts often have positive topological entropy \cite{gohlke,rust-spindeler}.
Random substitutions have gained increased attention in the mathematics community in recent years, as they give rise to subshifts with many interesting properties.
For instance, they provide examples of positive entropy subshifts that do not admit periodic points \cite{rust-periodic-points} and intrinsically ergodic subshifts that do not satisfy the classical specification property \cite{MT-entropy}.
Moreover, a broad variety of well-known subshifts can be obtained as subshifts of random substitutions, including all topologically transitive shifts of finite type \cite{gohlke-rust-spindeler} and all deterministic substitution subshifts \cite{rust-spindeler}.

The non-trivial topological entropy of random substitution subshifts provides a new invariant in their study not available for deterministic substitutions.
While random substitution subshifts typically have positive topological entropy, every deterministic substitution subshift is itself the subshift of a random substitution, so it is not true that all random substitution subshifts have positive topological entropy.
Rust and Spindeler \cite{rust-spindeler} conjectured that for \emph{primitive} random substitutions, the zero entropy subshifts are precisely those that can be obtained as subshifts of deterministic substitutions.
In \Cref{THM:positivity-char}, we provide a proof of this conjecture. 
As a consequence of \Cref{THM:positivity-char}, combined with the classification of complexity for deterministic substitutions given by Pansiot in \cite{pansiot}, we deduce that the complexity function of a primitive random substitution subshift grows exponentially or linearly, or is bounded above by a constant.

A systematic approach to calculating the topological entropy of random substitution subshifts was recently provided by Gohlke \cite{gohlke}.
There, it was shown that for \emph{compatible} primitive random substitutions, the topological entropy of the associated subshift coincides with the notion of \emph{inflation word entropy}, which is defined in terms of the underlying random substitution (as opposed to the subshift).
This provides a means of accurately estimating topological entropy and, in many cases, yields an exact formula.
However, a limitation is the assumption of compatibility.
The random substitutions that give rise to many well-known subshifts, such as shifts of finite type, are typically not compatible.
As such, it is desirable to better understand topological entropy for non-compatible random substitution subshifts.
The main obstacle to extending the approach in \cite{gohlke} to non-compatible random substitutions is that their subshifts do not have uniform letter frequencies, a property fundamental to the proofs in \cite{gohlke}.
In \Cref{S:CL-TE}, we develop techniques to circumvent this issue for \emph{constant length} random substitutions. 
We show that inflation word entropy and topological entropy coincide for all subshifts of constant length primitive random substitutions (\Cref{THM:inf-entropy}), providing a means of calculating the topological entropy for a broad class of non-compatible random substitutions.
Further, even without primitivity, we obtain lower and upper bounds for the topological entropy in terms of a quantity defined similarly to inflation word entropy.

While for primitive random substitutions the associated subshift has zero topological entropy if and only if it is the subshift of a deterministic substitution, the same does not hold without primitivity.
In fact, non-primitive random substitutions give rise to a rich variety of zero topological entropy subshifts with properties not witnessed for deterministic substitution subshifts.
In \Cref{S:CL-complexity}, we present several results on complexity for subshifts of non-compatible random substitutions of constant length.
We show that such random substitutions can give rise to subshifts with intermediate growth complexity function and provide sufficient conditions for this phenomena to occur.
Moreover, we show that there is no polynomial complexity gap for such subshifts.
This is in stark contrast to subshifts of deterministic substitutions, where the asymptotic growth rate of the complexity function is never faster than quadratic.

\section{Random substitution subshifts}\label{S:r-subst-prelim}

\subsection{Notation}

Throughout, we use the following symbolic notation. 
An \emph{alphabet} $\mc{A}$ is a finite collection of symbols, which we call \emph{letters}. 
We call a finite concatenation of letters a \emph{word}, and let $\mc A^{+}$ denote the set of all non-empty finite words with letters from $\mc A$. 
We write $\lvert u \rvert$ for the length of $u$ and, for each $a \in \mc A$, let $\lvert u \rvert_a$ denote the number of occurrences of $a$ in $u$. 
We let $\mc{A}^{\Z}$ denote the set of all bi-infinite sequences of elements in $\mc{A}$ and endow $\mc{A}^{\Z}$ with the discrete product topology. 
With this topology, the space $\mc{A}^{\Z}$ is compact and metrisable. 
We let $S$ denote the usual (left-)shift map. 
If $i, j \in \mathbb{Z}$ with $i \leq j$, and $x = \cdots x_{-1} x_{0} x_{1} \cdots \in \mc{A}^{\mathbb{Z}}$, then we write $x_{[i,j]} = x_i x_{i+1} \cdots x_{j}$. 
A \emph{subshift} $X$ is a closed and $S$-invariant subspace of $\mc A^{\Z}$. 
We let $\mc L (X)$ denote the \emph{language} of the subshift $X$: namely, the set of all finite words $u \in \mc A^{+}$ such that there exist $x \in X$ and $j \in Z$ for which $x_{[j,j+\lvert u \rvert-1]} = u$.
Moreover, for each $n \in \N$, we write $\mc L^n (X) = \{ u \in \mc L (X) \colon \lvert u \rvert = n \}$. 

For a given set $B$, we let $\# B$ be the cardinality of $B$ and let $\mc{F}(B)$ be the set of non-empty finite subsets of $B$.
Given functions $f, g \colon \N \rightarrow \N$, we write $g = \Theta (f)$ if there exist constants $C_1, C_2 > 0$ such that $C_1 < f(n)/g(n) < C_2$ for all $n \in \N$.

\subsection{Random substitutions and their subshifts}

We now give the definition of our main objects of study, random substitutions.

\begin{definition}
Let $\mc{A} = \{ a_{1}, \ldots, a_{d} \}$ be a finite alphabet.
A random substitution $(\vartheta, \mathbf{P})$ is a finite-set-valued function $\vartheta \colon \mathcal{A} \rightarrow \mathcal{F}(\mathcal{A}^{+})$ together with a set of non-degenerate probability vectors 
	\begin{align*}
	\mathbf{P} = \left\{ \mathbf{p}_i = ( p_{i, 1}, \ldots, p_{i, k_i} ) : k_i = \# \vartheta(a_i), \, \mathbf{p}_i \in (0,1]^{k_i} \text{ and } \sum_{j=1}^{k_i} p_{i,j} = 1 \text{ for all } 1 \leqslant i \leqslant d \right\},
	\end{align*}
such that
	\begin{align*}
	\vartheta \colon a_i \mapsto
		\begin{cases}
		s^{(i,1)} & \text{with probability } p_{i, 1},\\
		\hfill \vdots \hfill & \hfill \vdots\hfill\\
		s^{(i,k_i)} & \text{with probability } p_{i, k_i},
		\end{cases}
	\end{align*}
for every $1 \leqslant i \leqslant d$, where $\vartheta(a_i) = \{ s^{(i,j)} \}_{1\leqslant j \leqslant k_i}$.
For each $a \in \mc A$, we call a word $u \in \vartheta (a)$ a \emph{realisation} of $\vartheta (a)$.
If there exists an integer $\ell \geq 2$ such that for all $a \in \mc A$ and $v \in \vartheta (a)$ we have $\lvert v \rvert = \ell$, then we call $\vartheta$ \emph{constant length}.
If $\# \vartheta (a) = 1$ for all $a \in \mc A$, then we call $\vartheta$ \emph{deterministic}. 
We say that a deterministic substitution $\theta$ is a \emph{marginal} of $\vartheta$ if, for all $i \in \{1,\ldots,d\}$, we have $\theta \colon a_i \mapsto s^{(i,j)}$ for some $j \in \{ 1,\ldots,k_i\}$.
\end{definition}

The subshift associated with a random substitution (see \Cref{DEF:r-subst-subshift}) is independent of the choice of non-degenerate probabilities prescribed to the random substitution.
As such, we omit the explicit dependence on the probabilities in our notation throughout.

\begin{example}\label{EX:first-examples}
    Let $\vartheta_1$, $\vartheta_2$, $\vartheta_3$ and $\vartheta_4$ be the random substitutions defined by
    \begin{equation*}
        \begin{split}
            \vartheta_1 \colon
            \begin{cases}
                a \mapsto 
                \begin{cases}
                    ab\text{,}\\
                    ba\text{,}
                \end{cases}\\
                b \mapsto a\text{,}
            \end{cases}
            \quad \vartheta_2 \colon
            \begin{cases}
                a \mapsto 
                \begin{cases}
                    aa\text{,}\\
                    ab\text{,}
                \end{cases}\\
                b \mapsto ba\text{,}
            \end{cases}
            \quad \vartheta_3 \colon
            \begin{cases}
                a \mapsto aaa\text{,}\\
                b \mapsto
                \begin{cases}
                    bba\text{,}\\
                    abb\text{,}
                \end{cases}
            \end{cases}
            \quad \vartheta_4 \colon
            \begin{cases}
                a \mapsto abc\text{,}\\
                b \mapsto bca\text{,}\\
                c \mapsto cab\text{.}
            \end{cases}
        \end{split}
    \end{equation*}
    The random substitutions $\vartheta_1$, $\vartheta_2$ and $\vartheta_3$ are defined over the two-letter alphabet $\{a,b\}$ and $\vartheta_4$ is defined over the three-letter alphabet $\{a,b,c\}$. 
    The random substitutions $\vartheta_2$, $\vartheta_3$ and $\vartheta_4$ are constant length and $\vartheta_4$ is deterministic; $\vartheta_1$ is the well-studied \emph{random Fibonacci substitution}.
\end{example}

The action of a random substitution extends to finite words in a natural way, by applying it independently to each letter and concatenating in the order prescribed by the initial word---see \cite{MT-entropy} for more details.
For a given random substitution $\vartheta$ and $u \in \mc A^{+}$, we let $\vartheta (u)$ denote the set of all realisations of $u$ under $\vartheta$.
Higher powers of $\vartheta$ can be defined inductively as follows: for each $k \in \N$ and $a \in \mc A$, we let $\vartheta^k (a) = \cup_{v \in \vartheta^{k-1} (a)} \vartheta (v)$.

To a given random substitution, a subshift can be associated in a natural way.

\begin{definition}
    Given a random substitution $\vartheta$, we say that a word $u \in \mc{A}^{+}$ is \emph{($\vartheta$-)legal} if there exist an $a \in \mc{A}$ and $k \in \mathbb{N}$ such that $u$ appears as a subword of some word in $\vartheta^{k} (a)$. 
    We define the \emph{language of $\vartheta$} by $\mc{L}_{\vartheta} = \{ u \in \mc{A}^{+} : u \text{ is $\vartheta$-legal} \}$. 
    Similarly, for $w \in \mc{A}^{+} \cup \mc{A}^{\mathbb{Z}}$, we define the \emph{language of $w$} by $\mc{L} (w) = \{ u \in \mc{A}^{+} : u \text{ is a subword of } w \}$.
\end{definition}

We call $u \in \mc A^{+}$ an \emph{inflation word} if there exists a $v \in \mc L_{\vartheta}$ such that $u \in \vartheta (v)$.
Moreover, we call $u$ an \emph{exact inflation word} if there exists an $a \in \mc A$ such that $u \in \vartheta (a)$.

\begin{definition}\label{DEF:r-subst-subshift}
The \emph{subshift} associated with a random substitution $\vartheta$ is the system $(X_{\vartheta}, S)$, where $X_{\vartheta} = \{ w \in \mc{A}^{\mathbb{Z}} : \mc{L} (w) \subseteq \mc{L}_{\vartheta} \}$ and $S$ is the (left) shift map.
\end{definition}

We endow $X_{\vartheta}$ with the subspace topology inherited from $\mc{A}^{\mathbb{Z}}$. Since $X_{\vartheta}$ is defined in terms of a language, it is a compact $S$-invariant subspace of $\mc{A}^{\mathbb{Z}}$; hence, $X_{\vartheta}$ defines a subshift. 
For $n \in \mathbb{N}$, we write $\mc{L}_{\vartheta}^{n} = \mc{L}_\vartheta \cap \mc{A}^{n}$ for the subset of $\mc{L}_{\vartheta}$ consisting of words of length $n$.  

The set-valued function $\vartheta$ extends naturally to $X_{\vartheta}$, where for $w = \cdots w_{-1} w_{0} w_{1} \cdots \in X_{\vartheta}$ we let $\vartheta(w)$ denote the (possibly uncountable) set of sequences of the form $v = \cdots v_{-2} v_{-1}.v_0 v_1 \cdots$, where $v_j \in \vartheta(w_j)$ for all $j \in \mathbb{Z}$. 
We write $\vartheta (X_{\vartheta})$ for the set of bi-infinite sequences $x \in \mc A^{\Z}$ for which there exists a $y \in X_{\vartheta}$ such that $x \in \vartheta (y)$.
By definition, we have $\vartheta(X_{\vartheta}) \subseteq X_{\vartheta}$.

\subsection{Primitive and compatible random substitutions}\label{SS:prim-comp}

Random substitutions give rise to a broad variety of subshifts, so it is natural to impose additional assumptions in their study.
Two of the most commonly imposed assumptions are \emph{primitivity} and \emph{compatibility}.
These conditions are assumed in, for example, \cite{baake-spindeler-strungaru,gohlke,miro-et-al,mitchell-rutar,rust-periodic-points}.

\begin{definition}
    Let $\vartheta$ be a random substitution. We say that $\vartheta$ is \emph{primitive} if there exists a positive integer $k$ such that for every pair $a,b \in \mc A$, the letter $a$ appears as a subword of at least one realisation of $\vartheta^k (b)$.
\end{definition}

While primitive random substitutions can give rise to empty subshifts, this only happens when, for all $a \in \mc A$, $\vartheta (a)$ consists only of realisations of length $1$ \cite[Prop. 9]{rust-spindeler}.
For simplicity, we exclude these pathological cases and from now on restrict the definition of primitivity to those random substitutions that give rise to a non-empty subshift.

It follows routinely from the definition that if a random substitution $\vartheta$ is primitive, then $\mc L (X_{\vartheta}) = \mc L_{\vartheta}$ and, for all $k \in \N$, the random substitution $\vartheta^k$ gives rise to the same subshift $X_{\vartheta}$. 

An important consequence of primitivity is the existence of elements in the associated subshift for which every letter occurs with positive frequency.
Namely, if $\vartheta$ is primitive, there exist $x \in X_{\vartheta}$ for which $\lim_{n \rightarrow \infty} \lvert x_{[-n,n]} \rvert_a / (2n+1)$ exists and is strictly positive for all $a \in \mc A$.
In fact, this is true for almost every element of the subshift with respect to a suitably chosen ergodic probability measure \cite{gohlke-spindeler}.
However, in general, $x \in X_{\vartheta}$ need not even have well-defined letter frequencies. 
This possibility can be excluded if $\vartheta$ is additionally assumed to be compatible.

\begin{definition}
    We say that a random substitution $\vartheta$ is \emph{compatible} if for every $a \in \mc A$ and $u,v \in \vartheta(a)$, we have $\lvert u \rvert_b = \lvert v \rvert_b$ for all $b \in \mc A$.
\end{definition}

If $\vartheta$ is compatible, then for every $a \in \mc A$, all realisations of $\vartheta (a)$ have the same length. 
We denote this common length by $\lvert \vartheta (a) \rvert$. 
Both primitivity and compatibility are preserved under taking powers: namely, if $\vartheta$ is primitive (or compatible), then for all $k \in \N$ the random substitution $\vartheta^k$ is primitive (or compatible). 

Together, primitivity and compatibility guarantee the existence of uniformly well-defined letter frequencies and a well-defined inflation rate.

\begin{lemma}[{\cite[Prop.~14]{gohlke}}]\label{LEM:PF-bounds}
    Let $\vartheta$ be a compatible primitive random substitution.
    Then, there exists a real number $\lambda > 1$ and non-degenerate probability vector $\mathbf{R} \in (0,1]^{\# \mc A}$ such that for every $\varepsilon > 0$, there exists an $N \in \N$ for which every legal word $v$ of length at least $N$ satisfies $(\lambda-\varepsilon) \lvert v \rvert \leq \lvert \vartheta (v) \rvert \leq (\lambda+\varepsilon) \lvert v \rvert$ and $(R_a - \varepsilon) \lvert v \rvert \leq \lvert v \rvert_a \leq (R_a + \varepsilon) \lvert v \rvert$ for all $a \in \mc A$.
    Moreover, for every $x \in X_{\vartheta}$ and $a \in \mc A$, we have $\lvert x_{[-n,n]} \rvert_a / (2n+1) \rightarrow R_a$ as $n \rightarrow \infty$.
\end{lemma}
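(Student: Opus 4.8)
The plan is to reduce everything to the Perron--Frobenius theory of the substitution matrix together with a single uniform frequency estimate for long legal words.

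First I would introduce the substitution matrix $M = (M_{ab})_{a,b \in \mc A}$ defined by $M_{ab} = \lvert u \rvert_a$ for any realisation $u \in \vartheta(b)$; by compatibility this is independent of the choice of $u$, so $M$ is well defined, and every realisation $u \in \vartheta(w)$ has the same abelianisation (Parikh vector) $\phi(u) = (\lvert u\rvert_a)_{a\in\mc A} = M\phi(w)$, where $\phi(w) = (\lvert w\rvert_a)_{a \in \mc A}$. Primitivity of $\vartheta$ translates, again using compatibility, into the existence of $k$ with $(M^k)_{ab} = \lvert \vartheta^k(b)\rvert_a > 0$ for all $a,b$, so $M$ is a primitive nonnegative matrix. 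The Perron--Frobenius theorem then furnishes a simple, strictly dominant eigenvalue $\lambda > 0$ with strictly positive right and left eigenvectors $\mathbf r$ and $\mathbf l$, normalised so that $\mathbf l^{\mathsf T}\mathbf r = 1$, and the convergence $\lambda^{-k} M^k \to \mathbf r \mathbf l^{\mathsf T}$. I would set $\mathbf R = \mathbf r / \lVert \mathbf r\rVert_1$, a non-degenerate probability vector. To see $\lambda > 1$, I would use that the excluded degenerate case is the only way for all realisations to have length one: since some letter $a_0$ has a realisation of length $\geq 2$ and, by primitivity, $a_0$ occurs in every $\vartheta^k(b)$ for all large $k$, the lengths $\lvert \vartheta^k(b) \rvert$ are eventually strictly increasing in $k$ and hence unbounded; as $\lvert \vartheta^k(b)\rvert = \mathbf 1^{\mathsf T} M^k \mathbf e_b \sim \lambda^k \lVert \mathbf r\rVert_1\, l_b$, this forces $\lambda > 1$.

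Second, I would observe that the inflation bound and the final convergence claim both follow cheaply from the frequency bound. Indeed, compatibility gives $\lvert \vartheta(v)\rvert = \sum_{a} \lvert v\rvert_a \lvert \vartheta(a)\rvert$, and the row vector $(\lvert\vartheta(a)\rvert)_a$ equals $\mathbf 1^{\mathsf T} M$, so $\sum_a R_a \lvert \vartheta(a)\rvert = \mathbf 1^{\mathsf T} M \mathbf r / \lVert \mathbf r\rVert_1 = \lambda$. Hence if $\lvert v\rvert_a / \lvert v\rvert$ is within $\varepsilon'$ of $R_a$ for every $a$, then $\lvert \vartheta(v)\rvert / \lvert v\rvert$ is within $\varepsilon' \sum_a \lvert \vartheta(a)\rvert$ of $\lambda$, and choosing $\varepsilon'$ accordingly yields the first pair of inequalities. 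Likewise, for $x \in X_{\vartheta}$ every window $x_{[-n,n]}$ lies in $\mc L_{\vartheta}$ and has length $2n+1 \to \infty$, so the uniform frequency bound immediately gives $\lvert x_{[-n,n]}\rvert_a/(2n+1) \to R_a$.

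This leaves the uniform frequency estimate, which I expect to be the main obstacle. I would first record that level-$k$ inflation words have nearly limiting frequencies uniformly: since $\lambda^{-k}M^k \to \mathbf r \mathbf l^{\mathsf T}$, the ratios $(M^k)_{ac}/\lvert \vartheta^k(c)\rvert$ converge to $R_a$ as $k \to \infty$, uniformly in $c \in \mc A$; fix $k$ so that these lie within $\varepsilon/2$ of $R_a$. Then, given a legal word $v$, legality and primitivity let me realise $v$ as a subword of some realisation of $\vartheta^K(b)$ with $K \geq k$, and writing $\vartheta^K(b) = \vartheta^k(\vartheta^{K-k}(b))$ exhibits this realisation as a concatenation of exact level-$k$ inflation words $\vartheta^k(c)$. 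Thus $v$ meets a block of consecutive such inflation words, all but at most the two at the ends lying entirely inside $v$; the two boundary words contribute at most $2\max_c \lvert \vartheta^k(c)\rvert =: 2L$ letters. Writing $n_c$ for the number of interior inflation words of type $c$, I would bound $\bigl\lvert \lvert v\rvert_a - \sum_c n_c (M^k)_{ac}\bigr\rvert \leq 2L$ and $\bigl\lvert \lvert v \rvert - \sum_c n_c \lvert \vartheta^k(c)\rvert\bigr\rvert \leq 2L$, and use the uniform estimate $\lvert (M^k)_{ac} - R_a \lvert \vartheta^k(c)\rvert\rvert \leq (\varepsilon/2)\lvert \vartheta^k(c)\rvert$ to conclude that $\bigl\lvert \lvert v\rvert_a/\lvert v\rvert - R_a\bigr\rvert \leq \varepsilon/2 + CL/\lvert v\rvert$ for an explicit constant $C$. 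Choosing $N$ so large that $CL/N < \varepsilon/2$ then gives the frequency bound for all legal $v$ with $\lvert v\rvert \geq N$. The delicate points here are verifying that legality permits raising the level to $K \geq k$ (using primitivity to pass from a witnessing power $\vartheta^n(a)$ to a higher one) and tracking the boundary error uniformly in $v$; both are routine once the level-$k$ tiling is in place.
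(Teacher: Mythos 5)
Your proof is correct, but note that the paper does not prove this lemma at all --- it is quoted verbatim from \cite[Prop.~14]{gohlke} --- and your argument via the primitive substitution matrix, the Perron--Frobenius convergence $\lambda^{-k}M^{k}\to\mathbf{r}\,\mathbf{l}^{\mathsf T}$, and the level-$k$ tiling of a long legal word with $O(1)$ boundary error is essentially the standard argument given in that reference. The only point worth tightening is your justification of $\lambda>1$: under compatibility and primitivity, the only substitution in which every realisation has length one is the trivial one-letter identity substitution (compatibility forces each $\vartheta(a)$ to be a single letter, and primitivity then forces $\#\mc A=1$), which the statement implicitly excludes, so your appeal to a letter with a realisation of length at least two is legitimate once this is said.
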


The following condition is related to compatibility, and encompasses all compatible and all constant length random substitutions.

\begin{definition}\label{DEF:URP}
We say that $\vartheta$ has \emph{unique realisation paths} if for every $v \in \mc L_{\vartheta}$ and $k \in \N$ the concatenation map 
\begin{equation*}
    \vartheta^k(v_1) \times \cdots \times \vartheta^k(v_{\lvert v \rvert}) \to \mc L_{\vartheta}, \quad
    (w_1,\ldots,w_{\lvert v \rvert}) \mapsto w_1 \cdots w_{\lvert v \rvert}
\end{equation*}
is injective.
\end{definition}

\begin{lemma}[{\cite[Lemma~2.11]{MT-entropy}}]
    Let $\vartheta$ be a primitive random substitution. If $\vartheta$ is compatible or constant length, then $\vartheta$ has unique realisation paths.
\end{lemma}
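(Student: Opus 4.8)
The plan is to reduce the injectivity assertion to a purely length-theoretic observation. Fix $v \in \mc L_\vartheta$, write $n = \lvert v \rvert$, and suppose we have two tuples $(w_1,\ldots,w_n)$ and $(w_1',\ldots,w_n')$ in $\vartheta^k(v_1) \times \cdots \times \vartheta^k(v_n)$ with the same image, that is, $w_1 \cdots w_n = w_1' \cdots w_n'$. The key point I would exploit is that if the lengths of the factors depend only on $v$ and $k$, and not on the choice of realisation, then the concatenated word records at fixed coordinates exactly where each factor begins and ends. The factorisation into blocks is then forced, so $w_j = w_j'$ for every $j$ and the map is injective. Thus the entire statement comes down to showing that each of the two hypotheses guarantees that every realisation of $\vartheta^k(a)$ has a length depending only on $a$ and $k$.

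For the constant length case I would first record that constant length is preserved under taking powers: if every realisation of $\vartheta(a)$ has length $\ell$, then an induction using $\vartheta^k(a) = \cup_{w \in \vartheta^{k-1}(a)} \vartheta(w)$ shows that every realisation of $\vartheta^k(a)$ has length exactly $\ell^k$, for all $a \in \mc A$. Hence in the situation above $\lvert w_j \rvert = \lvert w_j' \rvert = \ell^k$ for each $j$, and reading off the coordinates $[(j-1)\ell^k + 1,\, j\ell^k]$ of the common concatenation yields $w_j = w_j'$ for every $j$.

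The compatible case is identical once the corresponding length statement is in place. Since compatibility is preserved under taking powers, as recorded earlier in the excerpt, $\vartheta^k$ is compatible, so all realisations of $\vartheta^k(a)$ share a common length $\lvert \vartheta^k(a) \rvert$ depending only on $a$. Setting $\sigma_0 = 0$ and $\sigma_j = \sum_{i=1}^{j} \lvert \vartheta^k(v_i) \rvert$, each factor $w_j$ of a tuple in the product occupies precisely the coordinates $[\sigma_{j-1}+1,\, \sigma_j]$ of the concatenation, and these cut positions depend on $v$ and $k$ alone. Equality of the two concatenations therefore forces the factors to agree termwise, and injectivity follows.

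The whole argument is no more than this bookkeeping, and in particular it does not use primitivity; that hypothesis is inherited from the ambient setting rather than needed here. The only point requiring genuine (if routine) care is the preservation of length-uniformity under powers, which is where I would be most attentive, since it is exactly this uniformity of factor lengths that can fail for a general primitive random substitution and whose failure is what makes unique realisation paths possible to violate without one of the two hypotheses.
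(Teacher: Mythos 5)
Your argument is correct, and nothing more is needed: once every realisation of $\vartheta^k(v_j)$ has a length determined by $v_j$ and $k$ alone (immediate by induction for constant length, and for compatible substitutions because compatibility is preserved under taking powers, as recorded in Section~2.3), the cut positions in the concatenation are forced and injectivity follows. Note that the paper itself offers no proof of this lemma---it is imported directly from \cite[Lemma~2.11]{MT-entropy}---but your length-bookkeeping reduction is precisely the standard argument for it, and you are right that primitivity plays no role.
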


Of the random substitutions defined in \Cref{EX:first-examples}, $\vartheta_1$, $\vartheta_2$ and $\vartheta_3$ are primitive. The random substitutions $\vartheta_1$, $\vartheta_3$ and $\vartheta_4$ are compatible, and all four have unique realisation paths as they are all compatible or constant length.

\section{Positivity of topological entropy}\label{S:positivity}

Positivity of topological entropy for random substitution subshifts was first identified in the pioneering work of Godr\`{e}che and Luck \cite{godreche-luck}. 
Since then, it has been shown that a wide range of random substitutions give rise to positive entropy subshifts (see, for example \cite{gohlke,koslicki,nilsson,spindeler-thesis,wing}).
Of course, every deterministic substitution subshift is itself a random substitution subshift, so not all subshifts of random substitutions have positive topological entropy.
Rust and Spindeler \cite{rust-spindeler} conjectured that, under primitivity, the zero entropy random substitution subshifts are precisely those that can be obtained as the subshift of a deterministic substitution.
Here, we provide a proof of this conjecture.

\begin{theorem}\label{THM:positivity-char}
    Let $X$ be the subshift of a primitive random substitution. Then $h_{\operatorname{top}} (X) = 0$ if and only if $X$ is the subshift of a (primitive) deterministic substitution.
\end{theorem}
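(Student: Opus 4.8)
The plan is to prove the two implications separately, with the backward implication routine and the forward implication carrying the essential content. For the backward implication, suppose $X = X_\theta$ for a primitive deterministic substitution $\theta$. Then by Pansiot's classification \cite{pansiot} the complexity function $p_X$ grows at most linearly, so $h_{\operatorname{top}}(X) = \lim_{n\to\infty} \tfrac1n \log p_X(n) = 0$ (this is also classical \cite{queffelec}). All the content lies in the forward implication, which I would establish through a dichotomy: for a primitive random substitution $\vartheta$, either a suitable length-normalised count of distinct inflation words has positive exponential growth rate---forcing $h_{\operatorname{top}}(X_\vartheta) > 0$---or this rate vanishes, in which case the random choices are asymptotically redundant and a deterministic substitution can be extracted with the same subshift.

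Throughout I would use that, by primitivity, $\mathcal{L}(X_\vartheta) = \mathcal{L}_\vartheta$ and that $\vartheta^k$ generates the same subshift as $\vartheta$ for every $k \in \N$, so I am free to pass to a convenient power. The engine of the forward implication is an entropy lower bound obtained by counting realisations. For each $k$ and each letter $a$, the set $\vartheta^k(a)$ consists of legal words; binning these by length and applying the subadditive characterisation of $h_{\operatorname{top}}$ (it suffices to exhibit a single diverging sequence of lengths $m$ with $p_{X_\vartheta}(m)$ exponentially large, since $h_{\operatorname{top}}$ is the genuine limit of $\tfrac1n\log p_{X_\vartheta}(n)$), I would show that if the number of distinct inflation words grows exponentially \emph{relative to their length}, then $h_{\operatorname{top}}(X_\vartheta) > 0$. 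The cleanest instance is a letter $a$ with two distinct realisations occurring along the dominant growth scale: independent choices at its occurrences in a long legal word produce many elements of $\mathcal{L}_\vartheta$, and a pigeonhole over the admissible lengths isolates one length carrying exponentially many words.

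The main obstacle is precisely the absence of compatibility, which breaks the two features that make this counting transparent in the compatible or constant length setting, and which is exactly why \Cref{LEM:PF-bounds} cannot be invoked. First, realisations of a letter may have different lengths, so the images produced above need not share a common length and, worse, a branching letter may occur with \emph{vanishing} density, so that a naive count of choice-patterns need not outgrow the length spread. I would control this by establishing Perron--Frobenius bounds for the minimal- and maximal-length marginal substitutions and by weighting the branching against the dominant (Perron) length scale, so that only branching that survives this normalisation contributes---this is the step that makes the dichotomy sharp. Second, without unique realisation paths the independent choices at distinct occurrences may \emph{collapse} to the same word, so that distinct choice-patterns fail to certify distinct legal words; I would address this by passing to a recognisable power and selecting branchings at well-separated positions whose images remain distinguishable after desubstitution.

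It remains to treat the degenerate alternative, in which the normalised branching rate vanishes. Here the contrapositive of the lower bound shows that, after passing to a suitable power, the surviving realisation choices never enlarge the language: freezing one realisation per letter yields a marginal $\theta$ of a power of $\vartheta$ with $\mathcal{L}_\theta = \mathcal{L}_\vartheta$, whence $X_\theta = X_\vartheta$ and $X_\vartheta$ is a deterministic substitution subshift. The remaining subtlety, which I expect to follow from primitivity of $\vartheta$ together with the collapsing established in the degenerate case, is to guarantee that $\theta$ can be chosen \emph{primitive} and that the frozen choices recover the full language $\mathcal{L}_\vartheta$ rather than a proper sublanguage, completing the proof.
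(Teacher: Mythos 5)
Your backward implication is fine, and your overall architecture (positive entropy from ``genuine branching'', otherwise freeze a marginal) matches the paper's in spirit. But the forward implication has a genuine gap at its crux: the passage from ``the length-normalised count of inflation words has zero growth rate'' to ``freezing one realisation per letter yields a marginal $\theta$ with $\mc L_\theta = \mc L_\vartheta$''. This is not a contrapositive of your counting lower bound. A vanishing normalised count of $\#\vartheta^k(a)$ tells you nothing about how the several realisations of $\vartheta^k(a)$ sit \emph{inside one another}, and that containment is exactly what you need: unless every realisation of $\vartheta^m(a)$ occurs as a subword of the single realisation you freeze, the marginal's language is a proper sublanguage. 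The paper's own example $\vartheta \colon a \mapsto \{a, aba\},\ b \mapsto \{bab\}$ is instructive: here $\#\vartheta^k(a)$ grows only polynomially in the realisation lengths, so your dichotomy correctly lands in the degenerate case, yet freezing $a \mapsto a$ destroys both primitivity and the language. What rescues the construction is a structural fact, not a counting one: if $h_{\operatorname{top}}(X_\vartheta)=0$ then (by the contrapositive of the splitting-pair criterion, \Cref{PROP:splitting-pair}) any two realisations $u,v$ of $\vartheta^m(a)$ with $\lvert u \rvert \le \lvert v\rvert$ must have $u$ a strong affix of $v$; hence there is a unique longest realisation containing all others as prefixes, and freezing \emph{that} one gives a primitive marginal with the full language. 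Your proposal never derives this affix structure --- the quantity you dichotomise on cannot see it --- and you explicitly defer both the primitivity of $\theta$ and the equality $\mc L_\theta = \mc L_\vartheta$ as ``remaining subtleties''. Those subtleties are the theorem.

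A secondary concern is that your positive-entropy branch re-derives, in weakened form, what \cite[Cor.~34]{rust-spindeler} already provides. Your normalised count only detects branching that survives Perron-scale weighting, whereas the splitting-pair criterion detects any pair of realisations failing the strong-affix condition --- including same-length pairs $u \neq v$ whose downstream images may partially collapse, precisely the situation your recognisability/desubstitution sketch is meant to handle but does not pin down. If you replace your dichotomy with the dichotomy ``some power of $\vartheta$ admits a splitting pair / no power does'', both halves of your argument close: the first half becomes a citation, and the second half acquires the affix structure needed to choose the marginal correctly.
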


By combining \Cref{THM:positivity-char} with the classification of complexity functions for subshifts of primitive deterministic substitutions given by Pansiot \cite{pansiot}, we obtain the following.
Consequently, we deduce that there do not exist primitive random substitution subshifts with intermediate growth complexity function.

\begin{corollary}\label{COR:no-int-growth}
    If $\vartheta$ is a primitive random substitution such that $h_{\operatorname{top}} (X_{\vartheta}) = 0$, then the complexity function of $X_{\vartheta}$ is either $\Theta(1)$ or $\Theta(n)$.
\end{corollary}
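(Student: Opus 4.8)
The plan is to prove the Theorem first and then read off the Corollary almost immediately. For the Theorem, the key direction is the forward implication: if $h_{\operatorname{top}}(X) = 0$, then $X$ is a deterministic substitution subshift. The reverse direction is essentially free, since any deterministic substitution subshift is well-known to have zero topological entropy (its complexity function is at most linear by Pansiot's classification for primitive substitutions), and a deterministic substitution is a (degenerate) random substitution, so $X$ is already a random substitution subshift.

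For the forward direction, my strategy is to argue contrapositively: I would show that if the random substitution $\vartheta$ is genuinely random---meaning it is not equal to one of its deterministic marginals on the level of generated subshifts---then $h_{\operatorname{top}}(X_\vartheta) > 0$. First I would use primitivity, via \Cref{LEM:PF-bounds} or the inflation-word machinery, to set up the counting. The core idea is that positive entropy must come from \emph{branching}: if there is some letter $a$ with $\#\vartheta(a) \geq 2$ and both realisations genuinely contribute distinct legal words, then iterating $\vartheta$ produces exponentially many distinct inflation words. Concretely, if $a$ occurs with positive frequency (guaranteed by primitivity, as noted after the definition of primitivity and in \Cref{LEM:PF-bounds}), then a legal word of length $n$ contains on the order of $n$ occurrences of $a$, and at each such occurrence we may independently choose among the $\geq 2$ realisations of $\vartheta(a)$. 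By the unique realisation paths property (\Cref{DEF:URP}, which holds here since primitive random substitutions that are compatible or of constant length have unique realisation paths, though in general I would need the analogous injectivity), these independent choices yield distinct legal words, giving $p_X(\vartheta^k(\text{word})) \gtrsim 2^{cn}$ and hence positive entropy. The delicate point is that \emph{distinctness} of the resulting words must survive concatenation; this is exactly what unique realisation paths or a suitable recognisability-type argument provides.

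The main obstacle I anticipate is handling the case where $\vartheta$ has branching ($\#\vartheta(a) \geq 2$ for some $a$) but the branching is \emph{redundant}, so that different realisations do not produce new legal words and the subshift coincides with that of a deterministic marginal. In other words, I must show that \emph{either} the branching creates genuinely new words (forcing positive entropy) \emph{or} all the randomness is superfluous and $X_\vartheta$ equals $X_\theta$ for some deterministic marginal $\theta$. Establishing this dichotomy---pinning down precisely when branching fails to increase the language---is the technical heart of the argument, and I expect it to require a careful analysis of how realisations overlap within legal words, likely invoking primitivity to propagate a single ``collision'' throughout the subshift. Without compatibility, letter frequencies need not be uniform, so I would lean on the weaker guarantee from primitivity that \emph{some} $x \in X_\vartheta$ has strictly positive frequency for every letter, which suffices to locate a positive-density supply of branching sites.

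Finally, \Cref{COR:no-int-growth} follows by combining \Cref{THM:positivity-char} with Pansiot's classification: if $h_{\operatorname{top}}(X_\vartheta) = 0$, then by the Theorem $X_\vartheta = X_\theta$ for a primitive deterministic substitution $\theta$, and Pansiot's theorem asserts that a primitive deterministic substitution subshift has complexity function that is either bounded (that is, $\Theta(1)$, in the eventually periodic case) or $\Theta(n)$. No intermediate or higher polynomial growth can occur under primitivity, which is exactly the stated dichotomy.
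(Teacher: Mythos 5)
Your derivation of \Cref{COR:no-int-growth} itself is exactly the paper's: the corollary is obtained by combining \Cref{THM:positivity-char} with Pansiot's classification for \emph{primitive} deterministic substitutions, and you correctly note that primitivity of the deterministic substitution is what rules out the $\Theta(n\log\log n)$, $\Theta(n\log n)$ and $\Theta(n^2)$ classes. If \Cref{THM:positivity-char} is taken as given, this final step is complete and identical to the paper's.

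The difficulty lies in the part of your proposal that carries the real weight, namely the forward direction of \Cref{THM:positivity-char}, where your sketch leaves the central dichotomy unresolved and the tool you propose does not suffice. Unique realisation paths is not available for a general primitive random substitution (the paper only guarantees it under compatibility or the constant length assumption), and branching alone does not force positive entropy: the paper's own example $\vartheta\colon a \mapsto \{a, aba\},\ b \mapsto \{bab\}$ is primitive with $\#\vartheta(a)=2$ yet has a finite (hence zero entropy) subshift. You correctly identify that one must show ``either the branching creates genuinely new words or all the randomness is superfluous,'' but you leave this as an anticipated obstacle rather than proving it, and a frequency-counting argument of the kind you outline cannot distinguish the two cases. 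The paper closes the gap with the splitting-pair criterion of Rust and Spindeler (\Cref{PROP:splitting-pair}): if $h_{\operatorname{top}}(X_\vartheta)=0$ then no letter admits a splitting pair for any power of $\vartheta$, which forces every realisation of $\vartheta^m(a)$ to be a strong affix (hence a subword) of a unique longest realisation; the marginal $\theta$ sending each letter to its longest realisation is then primitive and satisfies $\mc L(X_\theta) = \mc L(X_\vartheta)$, so $X_\vartheta = X_\theta$. This is a purely combinatorial argument about affixes of inflation words, not a letter-frequency or entropy-counting argument, and it is the step your proposal is missing.
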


\Cref{COR:no-int-growth} illustrates that there exists a \emph{complexity gap} for subshifts of primitive random substitutions.
In particular, any function that grows faster than linearly but sub-exponentially cannot be obtained as the complexity function of a primitive random substitution subshift.
On the other hand, there is no exponential complexity gap, as it was shown in \cite[Theorem~42]{gohlke-rust-spindeler} that the set $\{ h_{\operatorname{top}} (X_{\vartheta}) \colon \vartheta \text{ is a primitive random substitution} \}$ is dense in $[0,\infty)$.

Without primitivity, the conclusion of \Cref{THM:positivity-char} does not hold.
In \Cref{S:CL-complexity}, we show that non-primitive random substitutions give rise to subshifts whose complexity functions exhibit a rich variety of behaviour not witnessed for subshifts of deterministic or primitive random substitutions.
In particular, we show that there exist zero entropy subshifts of non-primitive random substitutions that have intermediate growth complexity function, as well as ones with polynomial growth not possible for deterministic substitution subshifts.
A key feature of these examples is the existence of letters that occur with frequency zero in every element of the subshift, which is not possible under primitivity.

We now turn towards the proof of \Cref{THM:positivity-char}. 
Our proof uses the notion of a \emph{splitting pair} for a random substitution, which was introduced by Rust and Spindeler \cite{rust-spindeler}.
There, they showed that this notion provides sufficient conditions for positive entropy of the associated subshift.

\begin{definition}
Given $u,v \in \mc{A}^{+}$, we say that $u$ is an \emph{affix} of $v$ if $u$ is either a prefix or a suffix of $v$, namely, if $u = v_{[1,\lvert u \rvert]}$ or $u = v_{[\lvert v \rvert - \lvert u \rvert + 1, \lvert v \rvert]}$.
If $u$ is both a prefix and a suffix, then we call $u$ a \emph{strong affix} of $v$. 
\end{definition}

\begin{definition}
If $\vartheta$ is a random substitution and $a \in \mc{A}$ is such that there exist realisations $u,v \in \vartheta (a)$ with $\lvert u \rvert \leq \lvert v \rvert$ for which $u$ is not a strong affix of $v$, then we say that $a$ \emph{admits a splitting pair for $\vartheta$}.
\end{definition}

\begin{prop}[{\cite[Corollary~34]{rust-spindeler}}]\label{PROP:splitting-pair}
    If $\vartheta$ is a primitive random substitution for which there exists a letter that admits a splitting pair, then $h_{\operatorname{top}} (X_{\vartheta}) > 0$.
\end{prop}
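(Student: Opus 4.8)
The plan is to show directly that the complexity function $p_{X_\vartheta}$ grows exponentially along a subsequence; since the limit defining $h_{\operatorname{top}}(X_\vartheta)$ exists, this forces $h_{\operatorname{top}}(X_\vartheta)>0$. Fix a letter $a$ admitting a splitting pair, and let $u,v\in\vartheta(a)$ with $\lvert u\rvert\le\lvert v\rvert$ be realisations for which $u$ is not a strong affix of $v$. Then $u$ fails to be a prefix of $v$ or fails to be a suffix of $v$; I treat the first case, the second being symmetric upon reversing all words. When $u$ is not a prefix of $v$, neither of $u,v$ is a prefix of the other (here $\lvert u\rvert\le\lvert v\rvert$ is used), so $\{u,v\}$ is a prefix code. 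The idea is to plant many independent copies of the choice ``$u$ or $v$'' inside legal words of comparable length, and to use the prefix code property to certify that the resulting words are pairwise distinct.

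The first step is to manufacture, for arbitrarily large $N$, a legal word $y$ containing many occurrences of $a$ whose length is only linear in $N$. By primitivity there is a $k$ such that $a$ occurs in at least one realisation of $\vartheta^k(b)$ for every $b\in\mc A$. Since the subshift is non-empty some realisation has length at least two (otherwise the subshift would be empty), and primitivity together with this fact yields legal words $w$ of every length $N$. For such a $w=w_1\cdots w_N$, I choose for each $i$ a realisation $y^{(i)}\in\vartheta^k(w_i)$ containing an $a$, and set $y=y^{(1)}\cdots y^{(N)}\in\vartheta^k(w)$. Then $y$ is legal, contains some number $M\ge N$ of occurrences of $a$, and has length at most $\ell^k N$, where $\ell$ denotes the maximal length of a realisation of $\vartheta(b)$ over $b\in\mc A$. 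This step supplies a positive density of $a$ and is the reason for passing to a power of $\vartheta$; without it the independent choices could be too sparse to force exponential growth.

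Next I apply $\vartheta$ to $y$. Fixing once and for all a realisation of every letter of $y$ other than the $M$ distinguished copies of $a$, and choosing independently $u$ or $v$ at each of those copies, produces words of the form $W(\mathbf c)=D_0c_1D_1\cdots c_MD_M$ with $c_i\in\{u,v\}$ and the $D_i$ fixed (possibly empty). Each such $W(\mathbf c)$ lies in $\vartheta(y)\subseteq\mc L_\vartheta=\mc L(X_\vartheta)$. Reading $W(\mathbf c)$ from the left, one peels off the fixed block $D_0$, then recovers $c_1$ as the unique element of $\{u,v\}$ that prefixes the remainder (using that $\{u,v\}$ is a prefix code), then peels $D_1$, and so on; hence $\mathbf c\mapsto W(\mathbf c)$ is injective. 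Restricting to those $\mathbf c$ with exactly $\fl{M/2}$ coordinates equal to $v$, all resulting words share a common length $\tilde n\le \ell^{k+1}N\le \ell^{k+1}M$, and there are $\binom{M}{\fl{M/2}}\ge 2^M/(M+1)$ of them, all distinct, so $p_{X_\vartheta}(\tilde n)\ge 2^M/(M+1)$.

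Finally, since $\tilde n\le\ell^{k+1}M$ and $M\to\infty$ as $N\to\infty$, this gives
\[
h_{\operatorname{top}}(X_\vartheta)=\lim_{n\to\infty}\frac1n\log p_{X_\vartheta}(n)\ge\limsup_{M\to\infty}\frac{1}{\ell^{k+1}M}\log\frac{2^M}{M+1}=\frac{\log2}{\ell^{k+1}}>0.
\]
The conceptual content lies in two places. The first is guaranteeing a positive density of $a$, so that exponentially many independent choices sit inside words of length $O(M)$; passing to $\vartheta^k$, where every letter's image can be chosen to contain $a$, is what resolves this. The second, and the step where the splitting-pair hypothesis is essential, is showing that the words produced are genuinely distinct and can be made equinumerous in length: this is exactly where failure of the strong-affix property is used, via the prefix code (or, in the symmetric case, suffix code) decoding. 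I expect the bookkeeping around equal lengths and the interspersed fixed blocks $D_i$ to be the only delicate point, and it is precisely the prefix-freeness --- rather than mere distinctness of $u$ and $v$ --- that makes the decoding, and hence the injectivity, go through.
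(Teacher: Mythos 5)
Your argument is correct. A point of comparison worth flagging: the paper does not actually prove this proposition---it is imported wholesale from Rust and Spindeler (their Corollary~34) and used as a black box---so there is no in-text proof to measure your attempt against. Your reconstruction is a sound, self-contained argument in the same spirit as the cited source: primitivity, together with the paper's convention that ``primitive'' excludes the degenerate all-length-one case, supplies legal words of length $O(M)$ containing $M$ occurrences of $a$, and the failure of the strong-affix property is precisely what makes $\{u,v\}$ a prefix code (or, after reversal, a suffix code), so that the $\binom{M}{\fl{M/2}}$ equal-length realisations you construct are pairwise distinct. The two places where the argument could have gone wrong are both handled correctly: the hypothesis $\lvert u\rvert \le \lvert v\rvert$ is exactly what upgrades ``$u$ is not a prefix of $v$'' to prefix-incomparability (so the left-to-right decoding of $W(\mathbf{c})$ past the fixed blocks $D_i$ is unambiguous), and restricting to a fixed number of $v$'s equalises the lengths at the cost of only a polynomial factor, which disappears in the exponential growth rate. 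The final passage from $p_{X_\vartheta}(\tilde n)\ge 2^M/(M+1)$ with $\tilde n\le \ell^{k+1}M$ to positivity of entropy is legitimate because the defining limit exists by Fekete's lemma, as the paper notes.
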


If $\vartheta$ is a primitive random substitution, then for all $m \in \N$ the random substitution $\vartheta^m$ gives rise to the same subshift as $\vartheta$.
Hence, to ascertain that the subshift $X_{\vartheta}$ has positive topological entropy, it is sufficient to verify that there exists a positive integer $m$ such that $\vartheta^m$ admits a splitting pair. Thus, to obtain \Cref{THM:positivity-char}, it suffices to show that every primitive random substitution for which no power admits a splitting pair can be obtained as the subshift of a deterministic substitution.
In particular, we show that any such random substitution gives rise to the same subshift as one of its marginals. 

\begin{proof}[Proof of \Cref{THM:positivity-char}]
    Let $\vartheta$ be a primitive random substitution with $h_{\operatorname{top}} (X_{\vartheta}) = 0$. 
    It follows by \Cref{PROP:splitting-pair} that no letter in $\mc A$ admits a splitting pair for any power of $\vartheta$.
    Thus, for every $m \in \N$ and $a \in \mc A$, there exists a unique realisation of $\vartheta^m (a)$ of greatest length, since if $u,v \in \vartheta^m (a)$ have $\lvert u \rvert = \lvert v \rvert$ and $u$ is a strong affix of $v$, then we require $u = v$.
    Let $\theta$ be the marginal of $\vartheta$ that maps each letter $a \in \mc A$ to the realisation of $\vartheta (a)$ of greatest length. 
    By construction, for each $a \in \mc A$ and $m \in \N$, $\theta^m (a)$ is the realisation of $\vartheta^m (a)$ of greatest length. 
    Since, for every $a \in \mc A$, $a$ does not admit a splitting pair for $\vartheta$, every realisation of $\vartheta (a)$ appears as a subword of $\theta (a)$, so primitivity transfers to $\theta$.
    Now, let $u \in \mc L (X_{\vartheta})$. 
    Since $\vartheta$ is primitive, we have $\mc L (X_{\vartheta}) = \mc L_{\vartheta}$, so there exist $m \in \N$, $a \in \mc A$ and $v \in \vartheta^m (a)$ such that $u$ appears as a subword of $v$.
    But every realisation of $\vartheta^m (a)$ appears as a subword of $\theta^m (a)$, so $u \in \mc L_{\theta}$. Primitivity of $\theta$ gives that $\mc L (X_{\theta}) = \mc L_{\theta}$, so we have that $u \in \mc L (X_{\vartheta})$. 
    Hence, $\mc L (X_{\vartheta}) = \mc L (X_{\theta})$ and so we conclude that $X_{\vartheta} = X_{\theta}$.
\end{proof}

While \Cref{THM:positivity-char} guarantees that every zero entropy primitive random substitution subshift can be obtained as the subshift of a deterministic substitution, it does not guarantee that \emph{every} random substitution that gives rise to that subshift must be deterministic. 
For example, the primitive random substitution $\vartheta \colon a \mapsto \{a,aba\}, \, b \mapsto \{bab\}$ gives rise to the finite subshift $X_{\vartheta} = \{(ab)^{\Z}, (ba)^{\Z} \}$. 
However, if a primitive random substitution is additionally assumed to have unique realisation paths (recall \Cref{DEF:URP}), then its subshift has zero topological entropy if and only if it is itself deterministic. 

\begin{prop}\label{PROP:URP-positivity}
    If $\vartheta$ is a primitive random substitution with unique realisation paths and there exists a letter $b \in \mc A$ such that $\# \vartheta (b) \geq 2$, then $h_{\operatorname{top}} (X_{\vartheta}) > 0$.
\end{prop}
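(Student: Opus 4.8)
The plan is to give a direct counting argument rather than routing through the splitting-pair machinery: I will produce, for arbitrarily large $n$, a single legal word $w$ of length $\Theta(n)$ that admits exponentially many (in $n$) distinct inflation words under $\vartheta$, all of length $\Theta(n)$. Since unique realisation paths guarantees that distinct choices of realisations yield distinct words, this produces exponentially many legal words of linearly bounded length, which forces positive entropy. The two ingredients are a linear lower bound on the number of occurrences of $b$ coming from primitivity, and the \emph{exact} count of realisations coming from unique realisation paths.

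First I would record the occurrence bound. Write $L = \max\{ \lvert u \rvert : a \in \mc A,\ u \in \vartheta(a)\}$ and let $k$ be a primitivity exponent for $\vartheta$. I claim there is a constant $c>0$ and arbitrarily long legal words with at least $c\lvert w \rvert$ occurrences of $b$. Indeed, take any legal word $W$ of length $n$ (these exist for every $n$, as $X_\vartheta \neq \emptyset$). By primitivity, for each letter $c$ of $W$ there is a realisation of $\vartheta^k(c)$ containing $b$; concatenating such realisations produces a word $w \in \vartheta^k(W) \subseteq \mc L_\vartheta$ with $\lvert w \rvert_b \geq n$ and $\lvert w \rvert \leq L^k n$, so $\lvert w \rvert_b \geq \lvert w \rvert / L^k$. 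As $\lvert w \rvert \geq n \to \infty$, this yields the desired words with $c = L^{-k}$. (Alternatively, one may invoke the fact recorded in \Cref{SS:prim-comp} that some $x \in X_\vartheta$ has strictly positive frequency of $b$.)

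Next I would count. Fix such a $w$ and apply unique realisation paths with exponent $1$ to $v = w$: the concatenation map on $\vartheta(w_1) \times \cdots \times \vartheta(w_{\lvert w \rvert})$ is injective, so $\#\vartheta(w) = \prod_{i=1}^{\lvert w \rvert} \#\vartheta(w_i) \geq (\#\vartheta(b))^{\lvert w \rvert_b} \geq 2^{\lvert w \rvert_b} \geq 2^{c \lvert w \rvert}$. Every element of $\vartheta(w)$ is a legal word of length at most $L\lvert w \rvert$, and these $\#\vartheta(w)$ words are pairwise distinct. Hence, writing $N = L\lvert w \rvert$, the number of legal words of length at most $N$ is at least $2^{cN/L}$. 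Because every word of length $m$ in $\mc L(X_\vartheta)$ extends to the right inside some element of $X_\vartheta$, the complexity function $p_{X_\vartheta}$ is non-decreasing, so $2^{cN/L} \leq \sum_{m=1}^{N} p_{X_\vartheta}(m) \leq N\, p_{X_\vartheta}(N)$. Taking logarithms and dividing by $N$ gives $\tfrac{1}{N}\log p_{X_\vartheta}(N) \geq \tfrac{c}{L}\log 2 - \tfrac{\log N}{N}$. Letting $\lvert w \rvert \to \infty$ along the words produced above, so that $N = L\lvert w \rvert \to \infty$, and using that $h_{\operatorname{top}}(X_\vartheta)$ is the (existing) limit of $\tfrac{1}{N}\log p_{X_\vartheta}(N)$, I conclude $h_{\operatorname{top}}(X_\vartheta) \geq \tfrac{c}{L}\log 2 > 0$.

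The main point to get right is the role of unique realisation paths: it is precisely what upgrades the $\prod_i \#\vartheta(w_i)$ \emph{realisation paths} into that many \emph{distinct} legal words, so that the exponential lower bound survives; without it the paths could collapse to far fewer words and the argument would fail. The only other technical care needed is that realisations of $\vartheta(w)$ have varying lengths, since $\vartheta$ need not be constant length, which is why I bound the number of words of length \emph{at most} $N$ and then use the monotonicity of $p_{X_\vartheta}$ to transfer the bound to $p_{X_\vartheta}(N)$ itself.
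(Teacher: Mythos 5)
Your proof is correct, but it takes a genuinely different route from the paper. The paper disposes of \Cref{PROP:URP-positivity} by citation: it invokes \cite[Theorem~3.5]{MT-entropy}, which produces an ergodic measure on $X_{\vartheta}$ with positive measure-theoretic entropy, and then concludes via the variational principle ($h_\mu \leq h_{\operatorname{top}}$). You instead give a self-contained combinatorial count: primitivity supplies arbitrarily long legal words $w$ with $\lvert w \rvert_b \geq L^{-k}\lvert w\rvert$, unique realisation paths upgrades the $\prod_i \#\vartheta(w_i) \geq 2^{\lvert w\rvert_b}$ realisation paths into that many \emph{distinct} legal words of length at most $L\lvert w\rvert$, and monotonicity of $p_{X_\vartheta}$ converts this into the explicit bound $h_{\operatorname{top}}(X_{\vartheta}) \geq L^{-(k+1)}\log 2$. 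The steps all check out: $\vartheta^k(W)$ and $\vartheta(w)$ consist of legal words since images of legal words are legal, the injectivity in \Cref{DEF:URP} is exactly what you need with exponent $1$, and passing to a subsequence of lengths is harmless because the entropy limit exists by Fekete. What your argument buys is elementariness and an explicit quantitative lower bound with no appeal to measure theory or to the splitting-pair machinery of \Cref{PROP:splitting-pair}; what the paper's route buys is the stronger conclusion that the positivity is witnessed by an ergodic measure, which is of independent interest. One cosmetic remark: your constant can be slightly sharpened by noting that the words in $\vartheta(w)$ have length at least $\lvert w\rvert$, but this does not affect positivity.
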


\Cref{PROP:URP-positivity} is a consequence of a lower bound on measure theoretic entropy proved in \cite{MT-entropy}.
In particular, it follows by \cite[Theorem~3.5]{MT-entropy} that for every random substitution $\vartheta$ satisfying the conditions of \Cref{PROP:URP-positivity}, there exists an ergodic probability measure $\mu$, supported on the subshift $X_{\vartheta}$, such that $X_{\vartheta}$ has positive (measure theoretic) entropy with respect to the measure $\mu$.
Positivity of topological entropy then follows from the fact that measure theoretic entropy provides a lower bound for topological entropy.

\section{Quantitative results on topological entropy}\label{S:CL-TE}

While \Cref{THM:positivity-char} provides a classification of positive topological entropy for primitive random substitution subshifts, it does not provide a means of calculating or accurately estimating the topological entropy for the subshift of a given random substitution.
Since primitive random substitutions give rise to a broad variety of subshifts, it is reasonable to expect that extra assumptions will be required to obtain precise quantitative results.
Gohlke \cite{gohlke} showed that for all compatible primitive random substitutions, the topological entropy of the associated subshift coincides with the notion of \emph{inflation word entropy}.
In this section, we show that the same holds for all constant length primitive random substitutions.
This allows the topological entropy to be calculated for a broad class of non-compatible random substitution subshifts, which so far has only been possible for isolated examples.

\subsection{Inflation word entropy}

Let $\vartheta$ be a random substitution such that, for all $m \in \N$ and $a \in \mc A$, every realisation of $\vartheta^m (a)$ has the same length.
This holds, for example, if $\vartheta$ is constant length or compatible.
Letting $\lvert \vartheta^m (a) \rvert$ denote the common length of realisations of $\vartheta^m (a)$, the lower and upper inflation word entropy of type $a$, respectively, are defined by
\begin{equation*}
    \begin{split}
        \underline{h}_a &= \liminf_{m \rightarrow \infty} \frac{1}{\lvert \vartheta^m (a) \rvert} \log (\# \vartheta^m (a)) \text{,} \\
        \overline{h}_a &= \limsup_{m \rightarrow \infty} \frac{1}{\lvert \vartheta^m (a) \rvert} \log (\# \vartheta^m (a)) \text{.}
    \end{split}
\end{equation*}
When these limits coincide, we denote their common value by $h_a$, which we call the \emph{inflation word entropy} of type $a$. 

\begin{theorem}\label{THM:inf-entropy}
    Let $\vartheta$ be a primitive random substitution of constant length. Then, for all $a \in \mc A$, the inflation word entropy $h_a$ exists and coincides with $h_{\operatorname{top}} (X_{\vartheta})$.
\end{theorem}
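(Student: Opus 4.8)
The plan is to pin down a single scalar that both the topological entropy and every inflation word entropy must equal, and then prove the two matchings separately. For $j \in \N$ put $N_j = \max_{b \in \mc A} \# \vartheta^j(b)$; since $\vartheta$ is constant length with inflation length $\ell$, every realisation of $\vartheta^j(b)$ has length $\ell^j$. As $\vartheta$ is primitive and constant length it has unique realisation paths, so for a legal word $w$ of length $\ell^{j_1}$ one has $\# \vartheta^{j_2}(w) = \prod_i \# \vartheta^{j_2}(w_i)$, and this yields the submultiplicative estimate $N_{j_1 + j_2} \leq N_{j_1} N_{j_2}^{\ell^{j_1}}$. Dividing by $\ell^{j_1 + j_2}$ and sending $j_1 \to \infty$ shows that $\ell^{-j} \log N_j$ converges to its infimum; call the limit $h'$. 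The goal is then to establish $h_{\operatorname{top}}(X_\vartheta) = h'$ and, separately, $h_a = h'$ for every $a \in \mc A$.

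For $h_{\operatorname{top}}(X_\vartheta) = h'$ the lower bound is immediate: distinct realisations of $\vartheta^M(b)$ are distinct legal words of length $\ell^M$, so $N_M \leq p_{X_\vartheta}(\ell^M)$, and dividing by $\ell^M$ while evaluating the (existing) limit defining $h_{\operatorname{top}}$ along the subsequence $(\ell^M)_M$ gives $h' \leq h_{\operatorname{top}}(X_\vartheta)$. For the upper bound I would fix a level $j$ and note that every legal word of length $n$ lies inside a concatenation of $q := \lceil n/\ell^j \rceil + 1$ consecutive level-$j$ inflation tiles at one of at most $\ell^j$ offsets, so that counting the underlying legal type-word of length $q$ together with a realisation of each tile gives $p_{X_\vartheta}(n) \leq \ell^j (\# \mc A)^q N_j^q$. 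The essential point is to let $n \to \infty$ with $j$ fixed, whence $q/n \to \ell^{-j}$ and both the offset and the extra boundary tile become negligible; this yields $h_{\operatorname{top}}(X_\vartheta) \leq \ell^{-j}(\log \# \mc A + \log N_j)$ for each $j$, and $j \to \infty$ gives $h_{\operatorname{top}}(X_\vartheta) \leq h'$. Decomposing the window into many tiles, rather than the two forced by a naive single-scale comparison, is precisely what avoids a spurious factor of two in the exponent.

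The substantial part is to show $h_a = h'$ for every letter, i.e. that the per-letter quantities neither fail to converge nor depend on $a$. Writing $x_a^{(M)} = \log \# \vartheta^M(a)$ and $\mathbf x^{(M)} = (x_b^{(M)})_b$, a one-step desubstitution at a fixed level $k$ with unique realisation paths gives $x_a^{(M)} = \Phi_a(\mathbf x^{(M-k)}) + O(1)$, where $\Phi_a(\mathbf x) = \max_{w \in \vartheta^k(a)} \sum_b \lvert w \rvert_b\, x_b$ and the $O(1)$ term is at most $\log \# \vartheta^k(a)$ uniformly in $M$. Each $\Phi_a$ is a maximum of linear functionals whose coefficients are nonnegative and sum to $\ell^k$, so $\Phi$ is monotone and homogeneous; after normalising by $\mathbf u^{(M)} = \mathbf x^{(M)}/\ell^M$ the recursion becomes $\mathbf u^{(M)} = \tilde\Phi(\mathbf u^{(M-k)}) + o(1)$ with $\tilde\Phi = \ell^{-k}\Phi$. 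The crux is a Birkhoff-type contraction of the oscillation $\operatorname{osc}(\mathbf x) = \max_a x_a - \min_a x_a$: choosing $k$ past the primitivity constant, for each $a$ some realisation of $\vartheta^k(a)$ contains the coordinate $b_+$ maximising $\mathbf x$, which forces $\tilde\Phi_a(\mathbf x) \geq \min_b x_b + \ell^{-k}\operatorname{osc}(\mathbf x)$, while always $\tilde\Phi_a(\mathbf x) \leq \max_b x_b$; hence $\operatorname{osc}(\tilde\Phi(\mathbf x)) \leq (1 - \ell^{-k})\operatorname{osc}(\mathbf x)$. Feeding this into the normalised recursion (and propagating the vanishing errors along each residue class modulo $k$) drives $\operatorname{osc}(\mathbf u^{(M)}) \to 0$; since the largest coordinate of $\mathbf u^{(M)}$ is $\ell^{-M}\log N_M \to h'$, every coordinate converges to $h'$, so each $h_a$ exists and equals $h'$.

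Combining the two matchings gives $h_a = h' = h_{\operatorname{top}}(X_\vartheta)$ for all $a$, as required. I expect the oscillation-contraction step to be the main obstacle: it is where primitivity (rather than mere constant length) is genuinely used, and where the deceptively weak ``every letter appears'' consequence of primitivity must be upgraded into a quantitative geometric contraction; some care is also needed to push the $o(1)$ errors through the iteration without destroying the contraction, and to justify that the maximising functionals can be coupled to the current maximal coordinate uniformly in $M$.
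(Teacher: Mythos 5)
Your proposal is correct, and it reaches the theorem by a genuinely different route from the paper. The paper dispenses with one inequality for free ($\vartheta^m(a)\subseteq\mc L^{\ell^m}(X_\vartheta)$ gives $\overline h_a\le h_{\operatorname{top}}(X_\vartheta)$) and spends all its effort on $h_{\operatorname{top}}(X_\vartheta)\le\underline h_a$, via a letter-frequency-weighted upper bound (\Cref{PROP:entropy-upp-bd}) applied to a maximising inflation word $u^k$, followed by a primitivity argument that plants $u^{k-K}$ as a tile inside a realisation of $\vartheta^k(b)$ and iterates the resulting inequality \eqref{EQ:inf-entropy-proof-eq}; convergence of $\ell^{-m}\log\#\vartheta^m(a)$ then falls out of the sandwich. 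You instead first prove that the \emph{maximum} $\ell^{-j}\log N_j$ converges (submultiplicativity), identify that limit with $h_{\operatorname{top}}(X_\vartheta)$ by a direct two-sided tiling count, and only then show all letters agree via the max-plus recursion and the oscillation contraction. The two arguments use primitivity in essentially the same way --- a level $K$ at which every letter occurs in some realisation of every $\vartheta^K(b)$, used to transfer the maximal count to every letter with weight $\ell^{-K}$ --- and your contraction factor $1-\ell^{-k}$ is the same estimate as the paper's geometric series $\sum_j((\ell^K-1)/\ell^K)^{j-1}=\ell^K$ in different clothing. What your packaging buys is a cleaner, more self-contained proof of this one theorem (the Birkhoff-type contraction is an elegant way to state ``all coordinates equalise''); what the paper's buys is that \Cref{PROP:entropy-upp-bd} is quantitative and primitivity-free, so it is reused for \Cref{PROP:plfv-entropy-bounds} and the non-primitive results, whereas your crude $(\#\mc A)^q$ count of type-words only works because the $j\to\infty$ limit washes it out. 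Two small points: $\ell^{-j}\log N_j$ converges, but not necessarily to its infimum (the correct conclusion from $a_{j_1+j_2}\le\ell^{-j_2}a_{j_1}+a_{j_2}$ is $\lim_j a_j\le\inf_j\frac{\ell^j}{\ell^j-1}a_j$); only convergence is needed, so this is harmless. And the difficulty you flag about ``coupling the maximising functional to the current maximal coordinate uniformly in $M$'' is not actually an obstacle: once $k$ exceeds the primitivity constant, the bound $\tilde\Phi_a(\mathbf x)\ge\min_b x_b+\ell^{-k}\operatorname{osc}(\mathbf x)$ holds for \emph{every} input vector $\mathbf x$ and every $a$, whichever coordinate happens to be maximal, so the contraction is automatic and uniform.
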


We present the proof of \Cref{THM:inf-entropy} in \Cref{SS:inf-entropy-proof}.
In general, the conclusion of \Cref{THM:inf-entropy} does not hold without primitivity. 
For example, consider the non-primitive random substitution defined by $\vartheta \colon a \mapsto \{ab,ba\}, \, b \mapsto \{aa\}, \, c \mapsto \{cc\}$.
The associated subshift $X_{\vartheta}$ contains the subshift of the random period doubling substitution $\varphi \colon a \mapsto \{ab,ba\}, \, b \mapsto \{aa\}$ as a subsystem, which is known to have positive topological entropy \cite{gohlke,spindeler-thesis}; hence, $h_{\operatorname{top}} (X_{\vartheta}) \geq h_{\operatorname{top}} (X_{\varphi}) > 0$.
However, since $\# \vartheta^m (c) = 1$ for all $m \in \N$, we have $h_c = 0$.
In fact, one can show that $h_{\operatorname{top}} (X_{\vartheta}) = h_{\operatorname{top}} (X_{\varphi}) = \log(4)/3$.

\Cref{THM:inf-entropy} allows a closed-form formula for the topological entropy to be obtained for many constant length random substitution subshifts.
We now present two examples of non-compatible random substitutions where this is possible.

\begin{example}\label{EX:non-compatible-first-example}
    Let $\vartheta$ be the primitive random substitution defined by $\vartheta \colon a \mapsto \{aa,bb\}, \, b \mapsto \{aa\}$,
    and let $X_{\vartheta}$ denote the corresponding subshift. 
    Using \Cref{THM:inf-entropy}, we show that
    \begin{equation*}
        h_{\operatorname{top}} (X_{\vartheta}) = \frac{1}{2} \log 2 \text{.}
    \end{equation*}
    Observe that $\vartheta^m (b) \subseteq \vartheta^m (a)$ for all $m \in \N$.
    Hence, by the constant length property, we have
    \begin{equation*}
        \vartheta^m (a) = \vartheta^{m-1} (a) \vartheta^{m-1} (a) \cup \vartheta^{m-1} (b) \vartheta^{m-1} (b) = \vartheta^{m-1} (a) \vartheta^{m-1} (a)
    \end{equation*}
    for all $m \in \N$, so $\# \vartheta^m (a) = (\# \vartheta^{m-1} (a))^2$.
    It follows inductively that
    \begin{equation*}
        \frac{1}{2^m} \log (\# \vartheta^{m} (a)) = \frac{1}{2} \log (\# \vartheta (a)) = \frac{1}{2} \log 2
    \end{equation*}
    for all $m$, so we conclude by \Cref{THM:inf-entropy} that $h_{\operatorname{top}} (X_{\vartheta}) = h_a = \log(2)/2$.
\end{example}

\begin{example}\label{EX:non-compatible-second-example}
Let $\vartheta$ be the primitive random substitution defined by $\vartheta \colon a \mapsto \{aa,ab\}, \, b \mapsto \{ba\}$,
and let $X_{\vartheta}$ denote the corresponding subshift. The topological entropy of $X_{\vartheta}$ is
\begin{equation*}
    h_{\operatorname{top}} (X_{\vartheta}) = \sum_{n=1}^{\infty} \frac{1}{2^n} \log n \approx 0.507834 \text{.}
\end{equation*}
To see this, note that since $\vartheta$ is primitive and constant length, it follows by \Cref{THM:inf-entropy} that $h_{\operatorname{top}} (X_{\vartheta}) = h_a = h_b$, so we can calculate $h_{\operatorname{top}} (X_{\vartheta})$ by computing $h_a$ or $h_b$. 
Since for all distinct realisations $u, v \in \vartheta (a) \cup \vartheta (b)$ we have $u \neq v$ and $\vartheta$ is constant length, we have
\begin{equation*}
    \vartheta^{m+1} (a) = \vartheta^{m} (aa) \cup \vartheta^{m} (ab) \ \text{and} \ \vartheta^{m+1} (b) = \vartheta^{m} (ba)
\end{equation*}
for all $m \in \N$, where the union is disjoint. It follows that 
\begin{equation*}
    \# \vartheta^{m+1} (a) = \# \vartheta^{m} (a) (\# \vartheta^{m} (a) + \# \vartheta^{m} (b)) \ \text{and} \ \# \vartheta^{m+1} (b) = \# \vartheta^{m} (a) \# \vartheta^{m} (b) \text{,}
\end{equation*}
noting that $\vartheta^{m+1} (u) = \vartheta^{m+1} (u_1) \cdots \vartheta^{m+1} (u_{|u|})$ for all $u \in \mc{L}_{\vartheta}$. 
We next show that $\# \vartheta^{m} (a) = (m+1) \# \vartheta^{m} (b)$ for all $m \in \mathbb{N}$. 
For $m=1$, the identity clearly holds since $\# \vartheta (a) = 2$ and $\# \vartheta (b) = 1$. For $m \geq 2$, we have
\begin{equation*}
    \frac{\# \vartheta^{m} (a)}{\# \vartheta^{m} (b)} = \frac{\# \vartheta^{m-1} (a) (\# \vartheta^{m-1} (a) + \# \vartheta^{m-1} (b))}{\# \vartheta^{m-1} (a) \# \vartheta^{m-1} (b)} = \frac{\# \vartheta^{m-1} (a)}{\# \vartheta^{m-1} (b)} + 1 \text{;}
\end{equation*}
thus, it follows by induction that $\# \vartheta^{m} (a) /\# \vartheta^{m} (b) = m+1$ for all $m \in \mathbb{N}$. 
Specifically, $\# \vartheta^{m} (a) = (m+1) \# \vartheta^{m} (b)$. Hence,
\begin{equation*}
    \log ( \# \vartheta^{m} (b)) = \log (\# \vartheta^{m-1} (a) \# \vartheta^{m-1} (b)) = \log m + 2 \log (\# \vartheta^{m-1} (b)) \text{,}
\end{equation*}
and it follows inductively that
\begin{equation*}
    \frac{1}{2^m} \log ( \# \vartheta^{m} (b)) = \sum_{n=1}^{m} \frac{1}{2^n} \log n \text{.}
\end{equation*}
Letting $m \rightarrow \infty$, we obtain
\begin{equation*}
    h_{\operatorname{top}} (X_{\vartheta}) = h_b = \sum_{n=1}^{\infty} \frac{1}{2^n} \log n \text{.}
\end{equation*}
\end{example}

\subsection{Proof of Theorem \ref{THM:inf-entropy}}\label{SS:inf-entropy-proof}

If $\vartheta$ is a primitive random substitution of constant length $\ell$, then $\vartheta^m (a) \subseteq \mc L_{\vartheta}^{\ell^m} = \mc L^{\ell^m} (X_{\vartheta})$ for all $m \in \N$ and $a \in \mc A$, so $\underline{h}_a \leq \overline{h}_a \leq h_{\operatorname{top}} (X_{\vartheta})$.
Thus, to prove \Cref{THM:inf-entropy} it suffices to show that $h_{\operatorname{top}} (X_{\vartheta}) \leq \underline{h}_a$ for all $a \in \mc A$. 
The following upper bound for $h_{\operatorname{top}} (X_{\vartheta})$ is central to our proof of this inequality. 
In fact, the following does not require the assumption of primitivity and we utilise this more general formulation in the following sections when we consider non-primitive random substitutions. 
Without primitivity, it is not always the case that $\mc A \subseteq \mc L(X_{\vartheta})$.
In what follows, for each $k \in \N$ we write $\mc I_k = \cup_{a \in \mc A \cap \mc L (X_{\vartheta})} \vartheta^k (a)$.

\begin{prop}\label{PROP:entropy-upp-bd}
    Let $\vartheta$ be a random substitution of constant length $\ell$ and let $m \in \N$. 
    For each $k \in \N$, let $u^k \in \mc I_k$ be such that the quantity $\# \vartheta^m (u^k)$ is maximised.
    Then, for all $k \in \N$, the following inequality holds:
    \begin{equation*}
        h_{\operatorname{top}} (X_{\vartheta}) \leq \frac{1}{\ell^m-1} \sum_{a \in \mc A} \frac{\lvert u^k \rvert_a}{\ell^k} \log (\# \vartheta^m (a)) \text{.}
    \end{equation*}
\end{prop}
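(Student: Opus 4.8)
The plan is to bound $p_{X}(n) = \# \mc L^{n}(X_{\vartheta})$ for large $n$ by a two-level de-substitution and then solve a self-referential inequality for the entropy. Throughout I write $X = X_{\vartheta}$ and $h = h_{\operatorname{top}}(X)$, and I work at length scales adapted to the constant length $\ell$.

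First I would establish a de-substitution lemma valid without primitivity: for every $w \in \mc L(X)$ and every $j \in \N$, the word $w$ is a subword of some realisation $z \in \vartheta^{j}(v)$, where $v \in \mc L(X)$ has length $\lceil |w|/\ell^{j}\rceil + 1$, all letters of $v$ lie in $\mc A \cap \mc L(X)$, and $w$ begins within the first length-$\ell^{j}$ block of $z$. Since $w$ occurs in some $x \in X$ and every finite window of $x$ is legal, hence a subword of some $\vartheta^{N}(a)$, I would peel off $j$ levels and pass to a limit (by compactness of $\mc A^{\Z}$) to produce a bi-infinite parent $x' \in X$ with $x \in \vartheta^{j}(x')$ up to a fixed phase; the restriction of $x'$ to the relevant window gives $v$, and its letters are productive precisely because they occur in $x' \in X$. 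The constant length hypothesis is what makes the block decomposition rigid and keeps the offset bounded by $\ell^{j}$.

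Then I would carry out the counting. Applying the lemma at level $m$, each $w \in \mc L^{n}(X)$ is determined by a legal word $v \in \mc L^{V}(X)$ with $V = \lceil n/\ell^{m}\rceil + 1$, a realisation $z \in \vartheta^{m}(v)$, and an offset of at most $\ell^{m}$, so $p_X(n) \le \ell^{m}\sum_{v \in \mc L^{V}(X)}\#\vartheta^{m}(v)$. To control $\#\vartheta^{m}(v) \le \prod_{a}(\#\vartheta^{m}(a))^{|v|_{a}}$, I would apply the lemma again at level $k$: $v$ embeds in some $y = y^{(1)}\cdots y^{(s)} \in \vartheta^{k}(b)$ with $b \in \mc L^{s}(X)$, $s = \lceil V/\ell^{k}\rceil + 1$, and each block $y^{(i)} \in \vartheta^{k}(b_{i}) \subseteq \mc I_{k}$. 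Since $|v|_{a} \le |y|_{a}$ and $\#\vartheta^{m}(a) \ge 1$, monotonicity gives $\#\vartheta^{m}(v) \le \prod_{i=1}^{s}\#\vartheta^{m}(y^{(i)}) \le (\#\vartheta^{m}(u^{k}))^{s}$, where the last inequality is exactly the defining maximality of $u^{k}$ over $\mc I_{k}$. Combining, $p_X(n) \le \ell^{m}\, p_X(V)\,(\#\vartheta^{m}(u^{k}))^{s}$.

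Finally I would take $\tfrac{1}{n}\log$ of this bound and let $n \to \infty$. Using $V/n \to \ell^{-m}$ and $s/n \to \ell^{-(m+k)}$ together with $\tfrac{1}{V}\log p_X(V) \to h$ (the limit exists by Fekete), the inequality passes to $h \le h\,\ell^{-m} + \ell^{-(m+k)}\log\#\vartheta^{m}(u^{k})$. Rearranging to isolate $h$ produces the factor $\ell^{m}-1$, and expanding $\log\#\vartheta^{m}(u^{k}) = \sum_{a}|u^{k}|_{a}\log\#\vartheta^{m}(a)$ yields the claimed inequality. I expect the main obstacle to be the de-substitution lemma, namely ensuring the parent word $v$ can be taken inside the subshift $X$ so that its letters are productive and the level-$k$ blocks lie in $\mc I_{k}$ without invoking primitivity; this is exactly what forces the compactness argument. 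The remaining subtlety is the self-referential step, where $h$ reappears on the right through the $\log p_X(V)$ term and generates the $\ell^{m}-1$ denominator.
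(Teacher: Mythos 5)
Your proposal is correct and follows essentially the same route as the paper: a two-level de-substitution (first at level $m$ to reduce $p_X(n)$ to counting realisations over shorter legal words, then at level $k$ to bound $\#\vartheta^m(v)$ by a product over $\mc I_k$-blocks and the maximality of $u^k$), followed by the self-referential inequality whose rearrangement produces the $\ell^m-1$ denominator. The only difference is cosmetic --- you spell out the compactness argument guaranteeing the parent word lies in $\mc L(X_\vartheta)$ so its blocks genuinely belong to $\mc I_k$, a point the paper asserts without proof.
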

\begin{proof}
    Fix $m,k \in \N$ and let $n \in \N$. 
    By definition, every finite word in the language of the subshift is a subword of a realisation of $\vartheta^m (v)$ for some $v \in \mc L (X_{\vartheta})$.
    Moreover, since $\vartheta$ is constant length, for every legal word of length $n \ell^m$ there exists such a $v$ with length $n+1$, so
    \begin{equation}\label{EQ:legal-words-set-expanded}
        \mc L^{n \ell^m} (X_{\vartheta}) = \bigcup_{v \in \mc L^{n+1} (X_{\vartheta})} \bigcup_{j=1}^{\ell^m} \, \vartheta^m (v)_{[j,j+n \ell^m-1]} \text{.}
    \end{equation}
    Every $v \in \mc L^{n+1} (X_{\vartheta})$ can be obtained as a subword of a legal word that is the concatenation of words in $\mc I_k$. 
    Moreover, by the constant length property, there exists such a word with length $k_n = \lceil \ell^{-k} (n+1) \rceil + 2$, so
    \begin{equation*}
        \# \vartheta^m (v) \leq \prod_{a \in \mc A} \left( (\# \vartheta^m (a))^{\lvert u^k \rvert_a} \right)^{k_n} \text{.}
    \end{equation*}
    Since this bound is independent of the choice of $v \in \mc L^{n+1} (X_{\vartheta})$, it follows by \eqref{EQ:legal-words-set-expanded} that
    \begin{equation*}
        \frac{1}{n \ell^m} \log p_{X_{\vartheta}} (n \ell^m) \leq \frac{1}{n\ell^m} \log \ell^m + \frac{1}{n\ell^m} \log p_{X_{\vartheta}} (n+1) + \frac{k_n}{n \ell^m} \sum_{a \in \mc A} \lvert u^k \rvert_a \log (\# \vartheta^m (a)) \text{.} 
    \end{equation*}
    Noting that $k_n / n \rightarrow \ell^{-k}$ as $n \rightarrow \infty$, we deduce that
    \begin{equation*}
        \left( 1 - \frac{1}{\ell^m} \right) h_{\operatorname{top}} (X_{\vartheta}) \leq \frac{1}{\ell^m} \sum_{a \in \mc A} \frac{\lvert u^k \rvert_a}{\ell^k} \log (\# \vartheta^m (a)) \text{.}
    \end{equation*}
    Dividing by $1 - \ell^{-m}$ completes the proof.
\end{proof}

We now give the proof of \Cref{THM:inf-entropy}. 

\begin{proof}[Proof of Theorem \ref{THM:inf-entropy}]
    It suffices to show that $\underline{h}_a \geq h_{\operatorname{top}} (X_{\vartheta})$ for all $a \in \mc A$.
    To this end, let $n \in \N$ and, for each $k \in \N$, let $u^k \in \mc I_k$ be such that $\# \vartheta^n (u^k)$ is maximised.
    By \Cref{PROP:entropy-upp-bd}, we have
    \begin{equation*}
        h_{\operatorname{top}} (X_{\vartheta}) \leq \frac{1}{\ell^n-1} \sum_{a \in \mc A} \frac{\lvert u^k \rvert_a}{\ell^k} \log (\# \vartheta^n (a))
    \end{equation*}
    for all $k \in \N$, and so
    \begin{equation}\label{EQ:inf-thm-proof-bd}
        h_{\operatorname{top}} (X_{\vartheta}) \leq \frac{1}{\ell^n-1} \liminf_{k \rightarrow \infty} \sum_{a \in \mc A} \frac{\lvert u^k \rvert_a}{\ell^k} \log (\# \vartheta^n (a)) \text{.}
    \end{equation}
    We show that, for every $b \in \mc A$, the right hand side is bounded above by $\ell^{n} (\ell^n-1)^{-1} \underline{h}_b$.
    For each $b \in \mc A$ and $k \in \N$, let $v_b^k$ denote the realisation of $\vartheta^k (b)$ for which $\# \vartheta^n (v_b^k)$ is maximised. 
    By definition, for every $k \in \N$ there exists a $b \in \mc A$ such that $v_b^k = u^k$.
    For each $k \in \N$, let $b(k) \in \mc A$ be a letter such that $\# \vartheta^{n} (v_{b(k)}^{k}) \leq \# \vartheta^{n} (v_b^{k})$ for all $b \in \mc A$.
    By primitivity, there is an integer $K$ such that, for all $b \in \mc A$, there is a realisation $w$ of $\vartheta^K (b)$ in which every letter appears at least once.
    For each $j \in \{1,\ldots, \ell^K \}$, we have that $v_{w_j}^{k-K} \in \vartheta^{k-K} (w_j)$, so the word $v = v^{k-K}_{w_1} \cdots v^{k-K}_{w_{\ell^K}}$ is a realisation of $\vartheta^{k-K} (w)$.
    Moreover, $v$ is a realisation of $\vartheta^k (b)$.
    By the construction of $v$, we have
    \begin{equation*}
        \# \vartheta^n (v) \geq \prod_{j=1}^{\ell^K} \# \vartheta^n (v_{w_j}^{k-K}) \text{.} 
    \end{equation*}
    Since every letter in $\mc A$ appears in $w$, there is an index $i \in \{ 1, \ldots, \ell^K\}$ such that $v_{w_i}^{k-K} = u^{k-K}$.
    Noting that $\# \vartheta^n (v_{w_j}^{k-K}) \geq \# \vartheta^n (v_{b(k-K)}^{k-K})$ for all $j \neq i$, we obtain that
    \begin{equation*}
        \log (\# \vartheta^n (v)) \geq \log (\# \vartheta^n (u^{k-K})) + (\ell^K-1) \log (\# \vartheta^n (v_{b(k-K)}^{k-K})) \text{,}
    \end{equation*}
    Observe that the right hand side in the above is independent of the choice of $b \in \mc A$.
    Since, for each $b$, $v_{b}^k$ was chosen to be the realisation of $\vartheta^k (b)$ for which $\# \vartheta^n (v_{b}^k)$ is maximised, the above inequality still holds if $v$ is replaced by $v_{b}^k$.
    Thus, it follows inductively that for all $m \in \N$ and $k \geq Km$, we have
    \begin{equation}\label{EQ:inf-entropy-proof-eq}
        \log (\# \vartheta^n (v^k_{b})) \geq \sum_{j=1}^{m} (\ell^K-1)^{j-1} \log (\# \vartheta^n (u^{k-jK})) + (\ell^K - 1)^m \log (\# \vartheta^n (v_{b(k-mK)}^{k-mK})) \text{.}
    \end{equation}
    Using the above, we ascertain a lower bound on $\underline{h}_b$.
    Namely, for all $b \in \mc A$ and $m \in \N$, we have
    \begin{equation*}
        \begin{split}
            \underline{h}_b = \liminf_{k \rightarrow \infty} \frac{1}{\ell^{n+k}} \log (\# \vartheta^{n+k} (b)) &\geq \frac{1}{\ell^n} \liminf_{k \rightarrow \infty} \frac{1}{\ell^k} \log (\# \vartheta^n (v^k_b))\\
            &\geq \frac{1}{\ell^{n+K}} \sum_{j=1}^{m} \left( \frac{\ell^K-1}{\ell^K} \right)^{j-1} \liminf_{k \rightarrow \infty} \frac{1}{\ell^{k-jK}} \log (\# \vartheta^n (u^{k-jK}))\text{,}
        \end{split}
    \end{equation*}
    where in the second inequality we have used that $\vartheta^n (v_b^k) \subseteq \vartheta^{n+k} (b)$ and in the third we have applied \eqref{EQ:inf-entropy-proof-eq} and bounded the second term by zero.
    By the constant length property and \eqref{EQ:inf-thm-proof-bd},
    \begin{equation*}
        \liminf_{k \rightarrow \infty} \frac{1}{\ell^{k-jK}} \log (\# \vartheta^n (u^{k-jK})) = \liminf_{k \rightarrow \infty} \sum_{a \in \mc A} \frac{\lvert u^{k-jK}\rvert_a}{\ell^{k-jK}} \log (\# \vartheta^n (a)) \geq (\ell^n-1) h_{\operatorname{top}} (X_{\vartheta})
    \end{equation*}
    for all $j \in \{1, \ldots, m\}$. Hence,
    \begin{equation*}
            \underline{h}_b \geq \frac{\ell^n-1}{\ell^{n+K}} h_{\operatorname{top}} (X_{\vartheta}) \sum_{j=1}^{m} \left( \frac{\ell^K-1}{\ell^K} \right)^{j-1} \xrightarrow{m \rightarrow \infty} \frac{\ell^n-1}{\ell^n} h_{\operatorname{top}} (X_{\vartheta}) \text{,}
    \end{equation*}
    noting that $\sum_{j=1}^{\infty} ((\ell^K-1)/\ell^K)^{j-1} = \ell^K$.
    Since this bound holds for all $n \in \N$, we conclude that $\underline{h}_b \geq h_{\operatorname{top}} (X_{\vartheta})$, which completes the proof.
\end{proof}

\subsection{General bounds}

The coincidence of topological entropy and inflation word entropy given by \Cref{THM:inf-entropy} provides a mechanism for calculating the topological entropy for a broad class of non-compatible random substitution subshifts.
However, a limitation of \Cref{THM:inf-entropy} is that it does not give any information about the rate of convergence.
Moreover, it does not provide a means of calculating topological entropy for non-primitive random substitutions.
In this section, we prove general bounds on the topological entropy for constant length random substitutions, which provide a means of obtaining good estimates, even in cases where a closed form cannot be obtained via \Cref{THM:inf-entropy}.
Central to our approach is the following definition.

\begin{definition}
    Let $\vartheta$ be a random substitution over some alphabet $\mc A$. We say that a vector $\mathbf{\nu} = (\nu_a)_{a \in \mc A} \in [0,1]^{\# \mc A}$ is a \emph{permissible letter frequency vector} for $\vartheta$ if there exists a letter $b \in \mc A \cap \mc L (X_{\vartheta})$, a sequence of integers $(n_k)_k$ and $v^k \in \vartheta^{n_k} (b)$ such that $\lvert v^k \rvert_a / \lvert v^k \rvert \rightarrow \nu_a$ for all $a \in \mc A$.
\end{definition}

The assumption that $b \in \mc L (X_{\vartheta})$ guarantees that the sequence $v^k$ is in the language of the subshift $X_{\vartheta}$. 
The inclusion $\mc A \subseteq \mc L (X_{\vartheta})$ always holds under primitivity. 
However, there are non-primitive random substitutions for which there exists a letter that is not in the language of the subshift. 
For example, for the random substitution $\vartheta \colon a \mapsto \{ab\},\, b \mapsto \{bc\},\, c \mapsto \{cb,cc\}$ we have that $a \notin \mc L (X_{\vartheta})$, since no letter can legally precede it.

Recall from \Cref{LEM:PF-bounds} that, for compatible primitive random substitutions, there exists a unique asymptotic growth rate $\lambda > 1$ and permissible letter frequency vector $\mathbf{R} = (R_a)_{a \in \mc A}$.
In this setting, Gohlke \cite{gohlke} showed that the following bounds hold for all $m \in \N$:
\begin{equation*}
    \frac{1}{\lambda^m} \sum_{a \in \mc A} R_a \log(\# \vartheta^m (a)) \leq h_{\operatorname{top}} (X_{\vartheta}) \leq \frac{1}{\lambda^m-1} \sum_{a \in \mc A} R_a \log(\# \vartheta^m (a)) \text{.}
\end{equation*}
Under compatibility, the only permissible letter frequency vector is $\mathbf{R}$.
However, without compatibility there often exists a continuum of permissible letter frequency vectors.
We prove an analogue of the above bounds for constant length random substitutions that holds without compatibility, using the notion of a permissible letter frequency vector.
In the lower bound, we can replace $\mathbf{R}$ with any permissible letter frequency vector; however, for the upper bound we require a particular choice.

\begin{prop}\label{PROP:plfv-entropy-bounds}
    Let $\vartheta$ be a random substitution of constant length $\ell \geq 2$.
    Then, for every permissible letter frequency vector $\nu = (\nu_a)_{a \in \mc A}$,
    \begin{equation*}
        h_{\operatorname{top}} (X_{\vartheta}) \geq \frac{1}{\ell} \sum_{a \in \mc A} \nu_a \log (\# \vartheta (a)) \text{.}
    \end{equation*}
    Moreover, there exists a permissible letter frequency vector $\eta = (\eta_a)_{a \in \mc A}$ such that
    \begin{equation*}
        h_{\operatorname{top}} (X_{\vartheta}) \leq \frac{1}{\ell-1} \sum_{a \in \mc A} \eta_a \log (\# \vartheta (a)) \text{.}
    \end{equation*}
    In particular, $\eta$ can be taken to be any permissible letter frequency vector that maximises the quantity $\sum_{a \in \mc A} \eta_a \log (\# \vartheta (a))$.
\end{prop}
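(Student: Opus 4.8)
The plan is to obtain the two bounds from essentially independent ingredients: the lower bound by directly exhibiting many distinct legal words of a fixed length via inflating a single word whose letter frequencies approximate $\nu$, and the upper bound by specialising \Cref{PROP:entropy-upp-bd} to $m=1$ and extracting a convergent subsequence of frequency vectors. Throughout I will use the elementary fact that for a constant length substitution the concatenation map is injective---the length-$\ell$ blocks of any realisation of $\vartheta(v)$ are uniquely recovered by splitting at multiples of $\ell$---so that $\# \vartheta(v) = \prod_{a \in \mc A} (\# \vartheta(a))^{\lvert v \rvert_a}$ for every word $v$; this requires no primitivity.

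For the lower bound, I fix a permissible letter frequency vector $\nu$, realised by $v^k \in \vartheta^{n_k}(b)$ with $b \in \mc A \cap \mc L(X_\vartheta)$ and $\lvert v^k \rvert_a / \lvert v^k \rvert \to \nu_a$, and pass to a subsequence so that $n_k \to \infty$, hence $\lvert v^k \rvert = \ell^{n_k} \to \infty$. Since $b \in \mc L(X_\vartheta)$ and $\vartheta(X_\vartheta) \subseteq X_\vartheta$, every word in $\vartheta(v^k)$ lies in $\mc L^{\ell \lvert v^k \rvert}(X_\vartheta)$, so
\begin{equation*}
    p_{X_\vartheta}(\ell \lvert v^k \rvert) \ \geq\ \# \vartheta(v^k) \ =\ \prod_{a \in \mc A} (\# \vartheta(a))^{\lvert v^k \rvert_a} \text{.}
\end{equation*}
Taking logarithms, dividing by $\ell \lvert v^k \rvert$, and letting $k \to \infty$ (so that the left-hand side converges to $h_{\operatorname{top}}(X_\vartheta)$ along the subsequence of lengths $\ell \lvert v^k \rvert \to \infty$) yields $h_{\operatorname{top}}(X_\vartheta) \geq \tfrac1\ell \sum_a \nu_a \log \# \vartheta(a)$.

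For the upper bound, I apply \Cref{PROP:entropy-upp-bd} with $m=1$: for every $k$, with $u^k \in \mc I_k$ maximising $\# \vartheta(u^k)$ and $\lvert u^k \rvert = \ell^k$,
\begin{equation*}
    h_{\operatorname{top}}(X_\vartheta) \ \leq\ \frac{1}{\ell-1} \sum_{a \in \mc A} \frac{\lvert u^k \rvert_a}{\ell^k} \log \# \vartheta(a) \text{.}
\end{equation*}
The vectors $\eta^{(k)} = (\lvert u^k \rvert_a / \ell^k)_a$ lie in the compact simplex, and each $u^k$ has a base letter in the finite set $\mc A \cap \mc L(X_\vartheta)$; passing to a subsequence along which $\eta^{(k)} \to \eta$ and the base letter is constant shows that $\eta$ is itself a permissible letter frequency vector, and taking the limit in the displayed inequality gives $h_{\operatorname{top}}(X_\vartheta) \leq \tfrac{1}{\ell-1}\sum_a \eta_a \log \# \vartheta(a)$. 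To see that any maximiser works, I note that the set $P$ of permissible letter frequency vectors is closed---being a finite union, over base letters, of closures of sets of inflation-word frequency vectors---and hence compact, so the continuous linear functional $\eta \mapsto \sum_a \eta_a \log \# \vartheta(a)$ attains its maximum on $P$ at some $\eta^\ast$; since the $\eta$ above lies in $P$, its functional value is at most that of $\eta^\ast$, whence $h_{\operatorname{top}}(X_\vartheta) \leq \tfrac{1}{\ell-1}\sum_a \eta^\ast_a \log \# \vartheta(a)$.

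The main obstacle I anticipate is the length-to-infinity point in the lower bound: the definition of a permissible vector permits a realising sequence with bounded $n_k$, in which case $\nu$ is the exact (rational) frequency of a single inflation word and the lengths $\ell \lvert v^k \rvert$ do not grow, so the inequality $p_{X_\vartheta}(\ell \lvert v^k \rvert) \geq \# \vartheta(v^k)$ only constrains $p_{X_\vartheta}$ at one scale rather than bounding $h_{\operatorname{top}}(X_\vartheta)$. I expect to resolve this either by showing every such $\nu$ can be realised by a sequence with $n_k \to \infty$ (legitimising the reduction "we may assume $n_k \to \infty$"), or by a continuity argument using density in $P$ of those $\nu$ so realisable; verifying one of these is the only genuinely delicate step, the remaining parts being the elementary inflation-word counting and a routine compactness extraction.
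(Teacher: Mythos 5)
Your argument is essentially the paper's own proof: the lower bound by counting realisations of $\vartheta(v^k)$ inside $\mc L^{\ell^{n_k+1}}(X_{\vartheta})$ via $\#\vartheta(v)=\prod_{a}(\#\vartheta(a))^{\lvert v\rvert_a}$, and the upper bound by taking $m=1$ in \Cref{PROP:entropy-upp-bd} and extracting, via the pigeonhole principle on the base letter and compactness of the simplex, a subsequential limit $\eta$ that is permissible by construction; the maximiser clause then follows exactly as you say, since the particular $\eta$ produced is dominated by any maximiser.

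The one step you flag as delicate is resolved in the paper simply by reading the definition of a permissible letter frequency vector as requiring $n_k\to\infty$: the paper's proof opens with ``a sequence of positive integers $(n_k)_k$ with $n_k\to\infty$'', even though the displayed definition omits this condition. You should be aware that neither of your two fallback strategies can work, because the lower bound is genuinely false under the literal definition. Take $\vartheta\colon a\mapsto\{aa\}$, $b\mapsto\{ab,ba\}$: every realisation of $\vartheta^m(b)$ contains exactly one $b$, so every legal word contains at most one $b$ and $p_{X_{\vartheta}}(n)\le n+1$, whence $h_{\operatorname{top}}(X_{\vartheta})=0$; yet the constant sequence $v^k=ab\in\vartheta^{1}(b)$ produces the ``permissible'' vector $(1/2,1/2)$, for which the claimed bound would give $h_{\operatorname{top}}(X_{\vartheta})\ge\tfrac14\log 2>0$. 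In this example every frequency vector arising along $n_k\to\infty$ converges to $(1,0)$, so $(1/2,1/2)$ is neither realisable with $n_k\to\infty$ nor in the closure of the vectors that are; hence neither the realisability claim nor a density argument can rescue the bounded-$n_k$ case. The correct move is the definitional one, and with $n_k\to\infty$ built into the definition your proof is complete and coincides with the paper's.
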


\begin{proof}
    We first prove the lower bound.
    For any permissible letter frequency vector $\nu$, there exists a letter $b \in \mc A \cap \mc L (X_{\vartheta})$, a sequence of positive integers $(n_k)_k$ with $n_k \rightarrow \infty$ and realisations $v^k \in \vartheta^{n_k} (b)$ such that $\lvert v^k \rvert_a / \lvert v^k \rvert \rightarrow \nu_a$ as $k \rightarrow \infty$, for all $a \in \mc A$. 
    Since $b \in \mc A$ and $\vartheta (X_{\vartheta}) \subseteq X_{\vartheta}$, every realisation of $\vartheta (v^k)$ is in the language of the subshift $X_{\vartheta}$.
    As every realisation of $\vartheta (v^k)$ has length $\ell^{n_k+1}$, it follows that $\# \mc L^{\ell^{n_k+1}} (X_{\vartheta}) \geq \# \vartheta (v^k)$. 
    Moreover, the constant length property gives that $\# \vartheta (v^k) = \prod_{a \in \mc A} (\# \vartheta (a))^{\lvert v^k \rvert_a}$, so we have
    \begin{equation*}
        \frac{1}{\ell^{n_k+1}} \log (\# \mc L^{\ell^{n_k+1}} (X_{\vartheta})) \geq \frac{1}{\ell} \sum_{a \in \mc A} \frac{\lvert v^k \rvert_a}{\ell^{n_k}} \log (\# \vartheta (a)) \text{.}
    \end{equation*}
    Letting $k \rightarrow \infty$, we obtain
    \begin{equation*}
        h_{\operatorname{top}} (X_{\vartheta}) \geq \frac{1}{\ell} \sum_{a \in \mc A} \nu_a \log (\# \vartheta (a)) \text{.}
    \end{equation*}
    The upper bound is largely a consequence of \Cref{PROP:entropy-upp-bd}.
    Letting $(u^k)_k$ denote a sequence of exact inflation words, with $u^k \in \mc I_k$ for all $k \in \N$,
    such that $\# \vartheta (u^k)$ is maximised, \Cref{PROP:entropy-upp-bd} gives
    \begin{equation}\label{EQ:plfv-proof-entropy-bd}
        h_{\operatorname{top}} (X_{\vartheta}) \leq \frac{1}{\ell-1} \sum_{a \in \mc A} \frac{\lvert u^k \rvert_a}{\ell^k} \log (\# \vartheta^m (a)) \text{.}
    \end{equation}
    By the pigeonhole principle, there exists a letter $b \in \mc A \cap \mc L(X_{\vartheta})$ such that $u^k \in \vartheta^k (b)$ for infinitely many $k$.
    Thus, by the compactness of $[0,1]^{\# \mc A}$, there exists a sequence of positive integers $(k_n)_n$ such that $u^{k_n} \in \vartheta^{k_n} (b)$ for all $n \in \N$ and $\lvert u^{k_n} \rvert_a / \ell^{k_n}$ converges for all $a \in \mc A$. 
    By definition, this limit is a permissible letter frequency vector $\eta$.
    Passing to limits along the subsequence $(k_n)_n$ in \eqref{EQ:plfv-proof-entropy-bd}, we obtain that
    \begin{equation*}
        h_{\operatorname{top}} (X_{\vartheta}) \leq \frac{1}{\ell-1} \sum_{a \in \mc A} \eta_a \log (\# \vartheta (a)) \text{.}
    \end{equation*}
    In particular, the above holds for any permissible letter frequency vector that maximises the quantity $\sum_{a \in \mc A} \eta_a \log (\# \vartheta (a))$. 
    This completes the proof.
\end{proof}

As a consequence of \Cref{PROP:plfv-entropy-bounds}, we obtain the following characterisation of positivity of topological entropy for constant length random substitutions. 
We emphasise that, in contrast to the results in \Cref{S:positivity}, we do not assume primitivity in the following.

\begin{corollary}\label{COR:CL-TE-positivity}
    If $\vartheta$ is a constant length random substitution, then $h_{\operatorname{top}} (X_{\vartheta}) > 0$ if and only if there exists a permissible letter frequency vector $\nu$ and $a \in \mc A$ such that $\nu_a > 0$ and $\# \vartheta (a) \geq 2$.
\end{corollary}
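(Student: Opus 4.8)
The plan is to deduce both implications directly from the two bounds established in \Cref{PROP:plfv-entropy-bounds}, so that essentially no new argument beyond a sign analysis is needed. The key elementary observation is that, since $\vartheta$ has constant length $\ell \geq 2$, every letter satisfies $\# \vartheta (a) \geq 1$, so $\log (\# \vartheta (a)) \geq 0$, with equality precisely when $\# \vartheta (a) = 1$. Combined with the fact that every permissible letter frequency vector has non-negative entries, this shows that in either bound each summand $\nu_a \log (\# \vartheta (a))$ is non-negative, and such a summand is strictly positive if and only if both $\nu_a > 0$ and $\# \vartheta (a) \geq 2$. The whole corollary hinges on translating this summand-level dichotomy into positivity of the sums appearing in the two bounds.

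For the backward implication, I would start from a permissible letter frequency vector $\nu$ and a letter $b \in \mc A$ with $\nu_b > 0$ and $\# \vartheta (b) \geq 2$, and apply the lower bound of \Cref{PROP:plfv-entropy-bounds} to this $\nu$. Since all summands are non-negative and the one indexed by $b$ is strictly positive, this gives
\begin{equation*}
    h_{\operatorname{top}} (X_{\vartheta}) \geq \frac{1}{\ell} \sum_{a \in \mc A} \nu_a \log (\# \vartheta (a)) \geq \frac{\nu_b}{\ell} \log (\# \vartheta (b)) > 0 \text{.}
\end{equation*}

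For the forward implication, I would assume $h_{\operatorname{top}} (X_{\vartheta}) > 0$ and invoke the upper bound of \Cref{PROP:plfv-entropy-bounds}, which furnishes a permissible letter frequency vector $\eta$ with
\begin{equation*}
    0 < h_{\operatorname{top}} (X_{\vartheta}) \leq \frac{1}{\ell-1} \sum_{a \in \mc A} \eta_a \log (\# \vartheta (a)) \text{.}
\end{equation*}
As the right-hand sum is then strictly positive, at least one summand $\eta_b \log (\# \vartheta (b))$ must be strictly positive; by the observation above this forces $\eta_b > 0$ and $\# \vartheta (b) \geq 2$. Taking $\nu = \eta$ together with this letter $b$ yields the required permissible letter frequency vector and letter.

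The corollary is thus immediate once \Cref{PROP:plfv-entropy-bounds} is available; the substantive content lies in that proposition rather than here, so I do not expect any serious obstacle. The only points requiring minor care are the sign argument recording that every summand is non-negative (so that positivity of the sum can be localised to a single letter), and, for the forward direction, the appeal to the \emph{existence} half of the upper bound, which guarantees that an appropriate permissible letter frequency vector $\eta$ is indeed available to witness the positivity of $h_{\operatorname{top}} (X_{\vartheta})$.
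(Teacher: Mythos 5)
Your proposal is correct and is exactly the argument the paper intends: the corollary is stated there as an immediate consequence of \Cref{PROP:plfv-entropy-bounds}, with the backward direction from the lower bound applied to the given $\nu$ and the forward direction from the existence half of the upper bound plus the observation that every summand $\nu_a \log (\# \vartheta (a))$ is non-negative. No issues.
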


We conclude this section by presenting an example of a primitive random substitution where it is not clear how to obtain an exact formula for the topological entropy from \Cref{THM:inf-entropy}, but where the bounds given by \Cref{PROP:plfv-entropy-bounds} allow a good estimate to be obtained.

\begin{example}\label{EX:approx-entropy}
    Let $\vartheta$ be the primitive random substitution defined by $\vartheta \colon a \mapsto \{aa,bb\}, \, b \mapsto \{ab\}$
    and let $X_{\vartheta}$ denote the associated subshift. 
    Since for all $m \in \N$, there exists a realisation of $\vartheta^m (a)$ in which $a$ is the only letter that appears, the vector $(1,0)^{T}$ is a permissible letter frequency vector for every power of $\vartheta$. 
    For all $m \in \N$, $\# \vartheta^m (a) \geq \# \vartheta^m (b)$, so for every permissible letter frequency vector $\nu = (\nu_a,\nu_b)$, we have $\nu_a \log (\# \vartheta^m (a)) + \nu_b \log (\# \vartheta^m (b)) \leq \log (\# \vartheta^m (a))$.
    Hence, it follows by \Cref{PROP:plfv-entropy-bounds} (applied to $\vartheta^m$) that
    \begin{equation*}
        \frac{1}{2^m} \log (\# \vartheta^m (a)) \leq h_{\operatorname{top}} (X_{\vartheta}) \leq \frac{1}{2^m -1} \log (\# \vartheta^m (a)) \text{.}
    \end{equation*}
    For all $m \in \N$, the inflation sets $\vartheta^m (a)$ satisfy the following relations:
    \begin{equation*}
        \vartheta^m (a) = \vartheta^{m-1} (aa) \cup \vartheta^{m-1} (bb) \quad \text{and} \quad \vartheta^m (b) = \vartheta^{m-1} (ab) \text{.}
    \end{equation*}
    Since $\vartheta$ is constant length and $\vartheta (a) \cap \vartheta (b) = \varnothing$, we have that $\vartheta^m (a) \cap \vartheta^m (b) = \varnothing$ for all $m \in \N$.
    Hence, it follows from the above relations that
    \begin{equation*}
        \# \vartheta^m (a) = \left( \# \vartheta^{m-1} (a) \right)^2 + \left( \# \vartheta^{m-1} (b) \right)^2 = \left( \# \vartheta^{m-1} (a) \right)^2 + \prod_{i=1}^{m-2} \left( \# \vartheta^i (a) \right)^2 \text{.}
    \end{equation*}
    A computer-assisted calculation for the case $m = 14$ gives that, to four decimal places, the topological entropy of the subshift $X_{\vartheta}$ is $h_{\operatorname{top}} (X_{\vartheta}) = 0.4115$.
\end{example}

\section{Complexity of constant length random substitution subshifts}\label{S:CL-complexity}

Classifying the functions that can be obtained as the complexity function of a subshift is a central problem in symbolic dynamics \cite{combinatorics-automata-book,ferenczi}. 
One of the most famous results in this direction is a consequence of the classical Morse--Hedlund theorem \cite{morse-hedlund}, and states that the complexity function of any subshift is either bounded above by a constant or grows at least linearly.
For subshifts of deterministic substitutions, it was shown by Pansiot \cite{pansiot} that the complexity function is either $\Theta(1)$, $\Theta(n)$, $\Theta(n \log \log n)$, $\Theta(n \log n)$ or $\Theta(n^2)$.
Moreover, under primitivity it is always $\Theta(1)$ or $\Theta(n)$.
Similarly, for primitive random substitutions, \Cref{COR:no-int-growth} gives that the complexity function either grows exponentially or is $\Theta(1)$ or $\Theta(n)$.

Without primitivity, the picture is very different.  
In \Cref{PROP:CL-poly-growth-powers}, we show that the set of $\alpha$ for which there exists a constant length random substitution with $\Theta(n^{\alpha})$ complexity function is dense in $[1,\infty)$.
Thus, in stark contrast to the deterministic and primitive random settings, there is no polynomial complexity gap.
Further, we show that constant length random substitutions can give rise to subshifts with intermediate growth complexity function, which primitivity forbids by \Cref{COR:no-int-growth}.
We provide sufficient conditions for intermediate growth in \Cref{PROP:int-growth-suff-cond}.

\begin{prop}\label{PROP:CL-poly-growth-powers}
    The set of $\alpha$ for which there exists a constant length random substitution subshift with $\Theta(n^{\alpha})$ complexity function is dense in $[1,\infty)$.
\end{prop}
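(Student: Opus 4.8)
The plan is to exhibit, for suitable integers $\ell \geq 2$ and $k \geq 1$, an explicit constant length random substitution $\vartheta_{\ell,k}$ whose subshift has complexity function $\Theta(n^{\alpha})$ with $\alpha = 1 + \log_{\ell} k$, and then to observe that $\{\,1 + \log_{\ell} k : \ell \geq 2,\, k \geq 1\,\}$ is dense in $[1,\infty)$. The substitution is to be built with a \emph{hierarchical marker} structure over an alphabet $\{a\} \cup \{c_1,\ldots,c_k\}$ (possibly with auxiliary letters): the letter $a$ is deterministic filler, while the marker letters carry, at each scale of the substitution, an independent $k$-ary choice. The design constraint---and the whole point---is that these choices must be \emph{recurrent}, so that the multi-scale words they generate genuinely appear in two-sided elements of $X_{\vartheta_{\ell,k}}$, while the marker letters have density tending to zero, so that by the bound of \Cref{PROP:entropy-upp-bd} we have $h_{\operatorname{top}}(X_{\vartheta_{\ell,k}}) = 0$ and the growth is genuinely polynomial.

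First I would fix the combinatorial target: a length-$n$ window of a typical element should reveal the choices made at the $\Theta(\log_{\ell} n)$ scales $t$ with $\ell^{t} \lesssim n$, together with a ``phase'' recording the alignment of the window against the marker lattice. The heuristic count is then
\[
    p_{X_{\vartheta_{\ell,k}}}(n) \;\asymp\; \underbrace{n}_{\text{phase}} \cdot \underbrace{k^{\log_{\ell} n}}_{\text{choice patterns}} \;=\; n^{\,1 + \log_{\ell} k}\text{,}
\]
and the substance of the proof is to turn this heuristic into matching upper and lower bounds. For the upper bound I would argue that, because the markers sit at geometrically spaced scales, any legal word of length $n$ meets only $O(\log_{\ell} n)$ markers, whose relative positions are constrained to the marker lattice; hence such a word is determined by its phase (at most $O(n)$ possibilities) together with the $\leq k^{O(\log_{\ell} n)}$ labellings of the markers it contains, giving $p(n) = O(n^{1 + \log_{\ell} k})$. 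For the lower bound I would, conversely, produce this many words explicitly: choosing the $k$-ary labels independently across the $\asymp \log_{\ell} n$ visible scales and sliding the phase through an $a$-run yields $\gtrsim n \cdot k^{\log_{\ell} n}$ pairwise distinct words, and I would verify that each such word actually lies in $\mc L(X_{\vartheta_{\ell,k}})$ by exhibiting a two-sided element of the subshift containing it. Density of the exponents is then immediate: given $\beta \geq 0$ and $\varepsilon > 0$, taking $\ell$ large and $k = \lfloor \ell^{\beta} \rceil$ gives $\log_{\ell} k \to \beta$, so $1 + \log_{\ell} k$ ranges over a dense subset of $[1,\infty)$.

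The main obstacle is the interaction between non-primitivity and the survival of words into the two-sided language---precisely the phenomenon flagged before \Cref{COR:CL-TE-positivity}, where a letter need not lie in $\mc L(X_{\vartheta})$. The naive realisation, in which a single non-recurrent ``source'' letter emits one marker per level, collapses: the multi-marker words are anchored to the source and cannot be extended two-sidedly, leaving only single-marker words and hence merely \emph{linear} complexity. At the opposite extreme, letting the marker letters reproduce and choose freely makes the number of independent choices in a window grow polynomially in $n$ rather than logarithmically, overshooting into the intermediate (stretched-exponential) growth of the type treated in \Cref{PROP:int-growth-suff-cond}. The delicate part is therefore to engineer the images so that the per-scale choices are copied coherently across each inflation word---making them recurrent, so the rich words persist in $X_{\vartheta_{\ell,k}}$---while keeping the choosing letters at frequency zero. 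Establishing this recurrence, and the consequent exact determination of $\mc L(X_{\vartheta_{\ell,k}})$, is where the real work lies; the matching complexity bounds, and thus the exact exponent, should follow once the language is understood.
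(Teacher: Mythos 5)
Your heuristic count $p(n) \asymp n \cdot k^{\log_{\ell} n} = n^{1+\log_{\ell} k}$ and your closing density argument are both the right shape, and they match the paper's target (the paper realises the exponents $1 + \log_{\ell}(\# \mc S)$ for $\mc S$ a set of permutation words, giving the dense set $\{1+\log_{\ell} k : \ell \geq 3,\ k \leq \ell!\}$). But the proposal has a genuine gap: the substitution $\vartheta_{\ell,k}$ is never actually defined, and you explicitly defer the construction and the determination of $\mc L(X_{\vartheta_{\ell,k}})$ as ``where the real work lies.'' Since the entire content of the proposition is the existence of such examples, a proof that postpones exhibiting one is incomplete. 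Moreover, your diagnosis of the obstacles points away from the construction that works. You dismiss the design in which ``a single source letter emits one marker per level'' on the grounds that the multi-marker words are anchored to the source and cannot be extended two-sidedly, ``leaving only linear complexity.'' That is essentially the paper's construction, and it does not collapse: over $\{a_1,\ldots,a_{\ell+2}\}$ one takes $a_i \mapsto a_i^{\ell}$ for $i \leq \ell$, $a_{\ell+1} \mapsto \mc S$ (a set of $k$ permutation words in the deterministic letters, so each marker chooses once and then freezes), and the recurrent source $a_{\ell+2} \mapsto a_{\ell+1} a_{\ell+2} a_1 \cdots a_1$. Because $a_{\ell+2}$ occurs in its own image, the nested copy of $\vartheta^{m}(a_{\ell+2})$ inside $\vartheta^{m'}(a_{\ell+2})$ sits at distance tending to infinity from both ends as $m' \to \infty$, so every legal word is deeply embeddable and $\mc L(X_{\vartheta}) = \mc L_{\vartheta}$; the anchored hierarchy of $\log_{\ell} n$ independent choices survives into the two-sided subshift. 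The fix is recurrence of the \emph{source}, not your proposed ``coherent copying of per-scale choices across each inflation word'' --- a mechanism that is left unspecified and is hard to reconcile with the defining independence of realisations across letters in a random substitution. Without a concrete substitution, your upper bound (``a legal word is determined by its phase and the labellings of the $O(\log_{\ell} n)$ markers it contains'') and lower bound (pairwise distinctness of the $n \cdot k^{\log_{\ell} n}$ candidate words) cannot be checked, and these are exactly the points where a wrong choice of images would make the count fail.
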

\begin{proof}
    We show that there exists a set $A$, dense in $[1,\infty)$, such that for every $\alpha \in A$ there exists a constant length random substitution whose subshift has $\Theta(n^{\alpha})$ complexity function.
    To this end, let $\ell \geq 3$, let $\mc A = \{a_1,\ldots,a_{\ell+2}\}$ be an alphabet of $\ell+2$ letters, and let
    \begin{equation*}
        \mc R_{\ell} = \left\{ u \in \mc A^{\ell} \colon \text{ for all $i \in \{1,\ldots,\ell\}$, $a_i$ appears in $u$ precisely once} \right\}
    \end{equation*}
    be the set of all permutations of $\{a_1,\ldots,a_{\ell}\}$.
    Note that every $u \in \mc R_{\ell}$ has length $\lvert u \rvert = \ell$.
    Given a non-empty subset $\mc S$ of $\mc R_{\ell}$, let $\vartheta_{\ell,\mc S}$ be the random substitution  of constant length $\ell$ defined over the alphabet $\mc A$ by
    \begin{equation*}
        \vartheta_{\ell, \mc S} \colon
        \begin{cases}
            a_i \mapsto a_i \cdots a_i \quad \text{for all $i \in \{1,\ldots,\ell\}$,}\\
            a_{\ell+1} \mapsto \mc S \text{,}\\
            a_{\ell+2} \mapsto a_{\ell+1} a_{\ell+2} a_1 \cdots a_1 \text{,}
        \end{cases}
    \end{equation*}
    and let $X_{\ell,\mc S}$ denote the corresponding subshift.
    For notational convenience, we write $p_{\ell, \mc S}$ for the complexity function of $X_{\ell, \mc S}$.
    We show that $p_{\ell, \mc S}$ is $\Theta(n^{1+\log_{\ell} \mc (\# S)})$.
    We first prove this in the case $n = \ell^k$ for some $k \in \N$ and then extend to all $n \in \N$ in the second step.
    To this end, we first compute the cardinalities $\# \vartheta^m (a_i)$ for each $m \in \N$ and $i \in \{1,\ldots,\ell+2\}$. 
    Observe that $\# \vartheta^m (a_i) = 1$ for all $m \in \N$ and $i \in \{1,\ldots,\ell\}$.
    Further, since every word in the set $\mc S$ contains only letters from the set $\{a_1,\ldots,a_{\ell}\}$, we have that $\# \vartheta^m (a_{\ell+1}) = \# \mc S$ for all $m \in \N$.
    Finally, we have
    \begin{equation*}
        \# \vartheta^m (a_{\ell+2}) = (\# \vartheta^{m-1} (a_{\ell+1})) (\# \vartheta^{m-1} (a_{\ell+2})) = (\# \mc S) (\# \vartheta^{m-1} (a_{\ell+2})) = (\# \mc S)^{m-1}
    \end{equation*}
    for all $m \in \N$.
    Every letter in the alphabet $\mc A$ is in the language of the subshift, so $\mc L (X_{\ell,\mc S}) = \mc L_{\vartheta_{\ell,\mc S}}$ and thus $p_{\ell, \mc S} (n) = \# \mc L_{\vartheta}^n$ for all $n \in \N$.
    By the constant length property, if $k \in \N$, $m \in \{1,\ldots,k\}$ and $u$ is a legal word of length $\ell^k$, then there exists a legal word $v$ of length $\ell^{k-m}+1$ and an integer $j \in \{1,\ldots,\ell^{k-m}\}$ such that $u \in \vartheta^m (v)_{[j,j+\ell^k]}$;
    hence,
    \begin{equation}\label{EQ:legal-words-decomp}
        \mc L_{\vartheta_{\ell,\mc S}}^{\ell^k} = \bigcup_{v \in \mc L_{\vartheta_{\ell,\mc S}}^{\ell^{k-m}+1}} \bigcup_{j=1}^{\ell^m} \vartheta^m (v)_{[j,j+\ell^k-1]} \text{.}
    \end{equation}
    For all $k \geq 2$, we have that $\# \vartheta^k (a_{\ell+2}) \geq \# \vartheta^k (a_{\ell+1}) \geq \# \vartheta^k (a_i)$ for all $i \in \{1,\ldots,\ell\}$.
    Since $a_{\ell+2} a_{\ell+2}$ is not a legal word, we have that $\# \vartheta^k (v) \leq \# \vartheta^k (a_{\ell+1}) \# \vartheta^{k} (a_{\ell+2}) = (\# \mc S)^k$ for all $v \in \mc L_{\vartheta}^2$.
    Hence, it follows by \eqref{EQ:legal-words-decomp} in the case $m=k$ that
    \begin{equation}\label{EQ:legal-words-UB}
        p_{\ell, \mc S} (\ell^k) \leq p_{\ell, \mc S} (2) \ell^k (\# \mc S)^k = p_{\ell, \mc S} (2) \, (\ell^k)^{1 + \log_{\ell} (\# S)}  \text{.}
    \end{equation}
    For the lower bound, observe that the $\ell+1$ letter word $w = a_{\ell+1} a_{\ell+2} a_1 \cdots a_1$ is legal, so it follows by \eqref{EQ:legal-words-decomp} in the case $m=k-1$ that
    \begin{equation}\label{EQ:legal-words-supset}
        \mc L_{\vartheta_{\ell,\mc S}}^{\ell^k} \supseteq \bigcup_{j=1}^{\ell^{k-1}} \vartheta^{k-1} (w)_{[j,j+\ell^k-1]} \text{.}
    \end{equation}
    Every realisation of $\vartheta^{k-1} (w)$ contains precisely one occurrence of the letter $a_{\ell+2}$. 
    Further, there is a positive integer $\ell^{k-1} < n < 2 \ell^{k-1}$ such that for every realisation of $\vartheta^{k-1} (w)$, the letter $a_{\ell+2}$ appears in position $n$.
    Thus, the above union is disjoint.
    Moreover, for all $j \in \{1,\ldots,\ell^{k-1}\}$, the inflated image of the second letter of $w$ is contained in the corresponding realisation of $\vartheta^{k-1} (w)_{[j,j+\ell^k-1]}$, so $\# \vartheta^{k-1} (w)_{[j,j+\ell^k-1]} \geq \# \vartheta^{k-1} (a_{\ell+2}) = (\# \mc S)^{k-2}$.
    Hence, it follows by \eqref{EQ:legal-words-supset} that
    \begin{equation}\label{EQ:legal-words-LB}
        p_{\ell,\mc S} (\ell^k) \geq \ell^{k-1} (\# \mc S)^{k-2} = \ell^{-1} (\# \mc S)^{-2} (\ell^k)^{1+\log_{\ell} (\# \mc S)} \text{.}
    \end{equation}
    Now, let $n \in \N$ and let $k$ be the unique integer such that $\ell^k \leq n < \ell^{k+1}$. 
    By the monotonicity of the complexity function, we have
    \begin{equation*}
        p_{\ell, \mc S} (\ell^k) \leq p_{\ell, \mc S} (n) \leq p_{\ell, \mc S} (\ell^{k+1})
    \end{equation*}
    and so it follows by \eqref{EQ:legal-words-UB} and \eqref{EQ:legal-words-LB} that 
    \begin{equation*}
        \ell^{-2} (\# \mc S)^{-3}
        n^{1+\log_{\ell} (\# \mc S)} \leq p_{\ell, \mc S} (n) \leq \ell (\# \mc S) \, p_{\ell, \mc S} (2) \, n^{1+\log_{\ell} (\# \mc S)} \text{.}
    \end{equation*}
    Hence, we conclude that the complexity function of $X_{\ell,\mc S}$ is $\Theta(n^{1+\log_{\ell} (\# \mc S)})$.
    Since the set
    \begin{equation*}
        A = \left\{ 1 + \log_{\ell} k \colon \ell \in \{3,4,5,\ldots\}, \, k \in \{1,\ldots,\ell !\} \right\}
    \end{equation*}
    is dense in $[1,\infty)$ and, for every $\alpha \in A$, there exists an $\ell \in \{3,4,5,\ldots\}$ and a subset $\mc S$ of $\mc R_{\ell}$ such that the subshift $X_{\ell, \mc S}$ has $\Theta(n^{\alpha})$ complexity function, the result follows.
\end{proof}

\begin{example}
    Let $\vartheta$ be the random substitution defined by $\vartheta \colon a \mapsto \{aaa\}, \, b \mapsto \{bbb\}, \, c \mapsto \{ccc\}, \, d \mapsto \{abc,\, acb\}, \, e \mapsto \{dea\}$. 
    The associated subshift has $\Theta(n^{1+\log_3 2})$ complexity function.
\end{example}

In the following, we provide sufficient conditions for a constant length random substitution to give rise to a subshift with intermediate growth complexity function.

\begin{prop}\label{PROP:int-growth-suff-cond}
    Let $\vartheta$ be a constant length random substitution for which the following hold:
    \begin{itemize}
        \item for all $a \in \mc A$ with $\# \vartheta (a) \geq 2$ and all permissible letter frequency vectors $\nu$, we have $\nu_a = 0$;
        \item there exists a letter $b \in \mc A$ for which $\# \vartheta (b) \geq 2$ and a realisation $v \in \vartheta (b)$ with $\lvert v \rvert_b \geq 2$.
    \end{itemize}
    Then, the associated subshift $X_{\vartheta}$ has intermediate growth complexity function.
\end{prop}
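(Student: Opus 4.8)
The plan is to verify separately the two defining features of intermediate growth: that the complexity function $p_{X_\vartheta}$ is subexponential (equivalently, $h_{\operatorname{top}}(X_\vartheta)=0$) and that it is superpolynomial, in the sense that $p_{X_\vartheta}(n)/n^\alpha \to \infty$ for every $\alpha$. The first feature is immediate from the hypotheses: the first condition asserts precisely that no permissible letter frequency vector $\nu$ satisfies $\nu_a > 0$ for a letter $a$ with $\#\vartheta(a)\geq 2$, which is exactly the failure of the positivity criterion in \Cref{COR:CL-TE-positivity}. Hence $h_{\operatorname{top}}(X_\vartheta)=0$, so $\frac{1}{n}\log p_{X_\vartheta}(n)\to 0$ and the growth is subexponential. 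All the work therefore lies in the superpolynomial lower bound.

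Fix the letter $b$ and the realisation $v\in\vartheta(b)$ with $\lvert v\rvert_b\geq 2$ provided by the second condition, and write $\ell$ for the constant length. The first step is to show $b\in\mc L(X_\vartheta)$. I would argue this by passing to a deterministic marginal $\tau$ of $\vartheta$ with $\tau(b)=v$; since $\mc L_\tau\subseteq\mc L_\vartheta$ we have $X_\tau\subseteq X_\vartheta$, so it suffices to exhibit an element of $X_\tau$ in which $b$ occurs. As $b$ occurs in $\tau(b)=v$, the letter $b$ reproduces under $\tau$, and a fixed-point argument supplies such an element: if some occurrence of $b$ in $v$ is interior, then the blocks $\tau^m(b)$ contain occurrences of $b$ with context growing on both sides and compactness yields a point of $X_\tau$ containing $b$; if instead $b$ occurs in $v$ only at its first and last positions, then each $\tau^m(b)$ begins and ends with $b$ and one assembles a two-sided $\tau$-fixed point containing $b$. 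Granting $b\in\mc L(X_\vartheta)$, choose $x\in X_\vartheta$ with $x_0=b$. Since $\vartheta^m(X_\vartheta)\subseteq X_\vartheta$ by \Cref{DEF:r-subst-subshift}, every realisation $w\in\vartheta^m(b)$ appears as the length-$\ell^m$ block above $x_0$ in some $\tilde x\in\vartheta^m(x)\subseteq X_\vartheta$; thus $\vartheta^m(b)\subseteq\mc L^{\ell^m}(X_\vartheta)$ and, as all these words are distinct and of length $\ell^m$, we get $p_{X_\vartheta}(\ell^m)\geq\#\vartheta^m(b)$.

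It then remains to bound $\#\vartheta^m(b)$ from below. Set $N_m=\max_{w\in\vartheta^m(b)}\lvert w\rvert_b$. Applying $\vartheta$ to a maximiser and substituting $v$ at each occurrence of $b$ gives $N_m\geq 2N_{m-1}$, hence $N_m\geq 2^m$. Taking $w\in\vartheta^{m-1}(b)$ with $\lvert w\rvert_b=N_{m-1}$ and forming realisations of $\vartheta(w)$ by fixing the images of all non-$b$ letters while independently choosing one of the $\#\vartheta(b)\geq 2$ images at each of the $N_{m-1}$ positions carrying $b$, one obtains pairwise distinct words: two distinct choices differ in the length-$\ell$ block over some such position. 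As these all lie in $\vartheta^m(b)$, this yields $\#\vartheta^m(b)\geq 2^{N_{m-1}}\geq 2^{2^{m-1}}$. Combining with the previous step and monotonicity of $p_{X_\vartheta}$, for arbitrary $n$ I take $m=\fl{\log_\ell n}$ to get $p_{X_\vartheta}(n)\geq p_{X_\vartheta}(\ell^m)\geq 2^{2^{m-1}}\geq \exp(c\,n^{\beta})$ for constants $c>0$ and $\beta=\log_\ell 2>0$. Since $\exp(c\,n^\beta)/n^\alpha\to\infty$ for every $\alpha$, the complexity is superpolynomial, which together with subexponentiality gives intermediate growth. One checks that $\ell\geq 3$ is in fact forced by the hypotheses, since otherwise $v=bb$ and $b^{\Z}\in X_\vartheta$ would give a permissible vector with $\nu_b=1$, contradicting the first condition; thus $\beta<1$, consistent with subexponential growth.

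I expect the membership $b\in\mc L(X_\vartheta)$ to be the main obstacle, as it is the only point where one must pass from the legal language $\mc L_\vartheta$ to genuine occurrence in a bi-infinite element of $X_\vartheta$, and the boundary case in which both copies of $b$ sit at the extreme ends of $v$ requires the separate two-sided fixed-point construction. The subsequent counting and the entropy computation are routine.
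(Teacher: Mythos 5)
Your proof follows the same route as the paper's: zero entropy via \Cref{COR:CL-TE-positivity}, then the doubly exponential lower bound $\#\vartheta^m(b)\geq 2^{2^{m-1}}$ obtained by tracking the maximal number of occurrences of $b$ in realisations of $\vartheta^m(b)$ and using the constant length property to see that independent choices at the $b$-positions yield pairwise distinct concatenations. The counting, the passage from $n=\ell^m$ to general $n$ by monotonicity, and the conclusion all match the paper, which records the key inequality in the essentially identical form $\#\vartheta^m(b)\geq \#\vartheta(v^{m-1})\geq 2^{\lvert v^{m-1}\rvert_b}\geq 2^{2^{m-1}}$. Your closing observation that the hypotheses force $\ell\geq 3$ is correct and not in the paper.

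The one place where you go beyond the paper is the step $b\in\mc L(X_\vartheta)$, which the paper asserts in a single sentence, and your second case there is flawed as stated. If $b$ occurs in $v=\tau(b)$ only at the first and last positions, the bi-infinite point assembled from the left- and right-infinite limits of $\tau^m(b)$ contains the word $bb$ at the seam, and $bb$ need not be $\vartheta$-legal: for $\tau\colon b\mapsto bab$, $a\mapsto aaa$, no word $\tau^m(b)$ ever contains $bb$, so the assembled sequence fails the membership criterion of \Cref{DEF:r-subst-subshift} and does not lie in $X_\vartheta$. The gap is easily repaired: since $\lvert v\rvert_b\geq 2$ and $\ell\geq 3$, the word $\tau^2(b)=\tau(b)\,\tau(v_2\cdots v_{\ell-1})\,\tau(b)$ contains an occurrence of $b$ at position $\ell$, which is interior, so your first case applies with $\tau^2$ in place of $\tau$. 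Alternatively, and closer to what the paper leaves implicit: $v^m\in\vartheta^m(b)$ contains at least $2^m$ occurrences of $b$ in a word of length $\ell^m$, so the median occurrence has at least $2^{m-1}-1$ letters of legal context on each side, and compactness gives $b\in\mc L(X_\vartheta)$ directly, with no case distinction and no marginal needed. With that repair your argument is complete and coincides with the paper's.
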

\begin{proof}
    The first condition guarantees that $h_{\operatorname{top}} (X_{\vartheta}) = 0$ by \Cref{COR:CL-TE-positivity}.
    Meanwhile, it follows inductively from the second condition that for all $m \in \N$, there exists a realisation $v^m \in \vartheta^m (b)$ with $\lvert v^m \rvert_b \geq 2^{m}$.
    This gives that the letter $b$ is in the language of the subshift, so for all $m \in \N$ we have $\vartheta^m (b) \subseteq \mc L^{\ell^m} (X_{\vartheta})$; hence,
    \begin{equation*}
        p_{X_{\vartheta}} (\ell^m) \geq \# \vartheta^m (b) \geq \# \vartheta (v^{m-1}) \geq 2^{\lvert v^m \rvert_b} \geq 2^{2^{m}} = 2^{ (\ell^m)^{\log_{\ell} 2}} \text{.}
    \end{equation*}
    Thus, if $n \in \N$ and $m$ is the integer such that $\ell^m \leq n < \ell^{m+1}$, then by monotonicity of the complexity function we have
    \begin{equation*}
        p_{X_{\vartheta}} (n) \geq p_{X_{\vartheta}} (\ell^m) \geq 2^{(\ell^m)^{\log_{\ell} 2}} \geq 2^{2^{-1} n^{\log_{\ell} 2}} \text{,}
    \end{equation*}
    which grows faster than any polynomial.
    Since $h_{\operatorname{top}} (X_{\vartheta}) = 0$, we also have that $p_{X_{\vartheta}}$ grows sub-exponentially.
    Hence, we conclude that $p_{X_{\vartheta}}$ has intermediate growth.
\end{proof}

\begin{example}\label{EX:int-growth}
Let $\vartheta$ be the random substitution defined by $\vartheta \colon a \mapsto \{aaa\},\, b \mapsto \{abb,bba\}$ and let $X_{\vartheta}$ denote the corresponding subshift. 
We have that $\lvert \vartheta^m (a) \rvert_b = 0$ and $\lvert \vartheta^m (b) \rvert_b = 2^m$ for all $m \in \N$, so for every realisation $v \in \vartheta^m (a) \cup \vartheta^m (b)$ we have $\lvert v \rvert_b / 3^m \leq 2^m / 3^m \rightarrow 0$. 
Hence, the only permissible letter frequency vector is $(1,0)^{T}$; since $\# \vartheta (a) = 1$, the first condition of \Cref{PROP:int-growth-suff-cond} is satisfied. 
Moreover, since $abb \in \vartheta (b)$ and $\# \vartheta (b) \geq 2$, the second condition is also satisfied.
Therefore, it follows by \Cref{PROP:int-growth-suff-cond} that $X_{\vartheta}$ has intermediate growth complexity function.
In fact, by following similar arguments to those used in the proof of \Cref{PROP:CL-poly-growth-powers}, it can be shown that there exist constants $c_1, c_2 > 0$ such that, for all $n \in \N$,
\begin{align*}
    c_1 2^{n^{\log_{3} 2}} \leq p_{X_{\vartheta}} (n) \leq c_2 n 2^{n^{\log_{3} 2}} \text{.}
\end{align*} 
\end{example}

\section*{Acknowledgments}

The author thanks Philipp Gohlke, Dan Rust and Tony Samuel for valuable comments on a draft version of this paper.
He is also grateful for financial support from EPSRC DTP, the University of Birmingham, and EPSRC grant EP/Y023358/1, which supported this work.

\bibliographystyle{abbrv}
\bibliography{ref}

\end{document}